\documentclass{amsart}

\numberwithin{equation}{section}



\usepackage{graphicx,amsthm}
\usepackage{verbatim,amsmath,amscd,amssymb,color}
\usepackage[mathscr]{eucal}
\input xy
\xyoption{all}
\begin{document}
\renewcommand{\labelenumi}{$($\roman{enumi}$)$}
\renewcommand{\labelenumii}{$(${\rm \alph{enumii}}$)$}
\font\germ=eufm10
\newcommand{\cI}{{\mathcal I}}
\newcommand{\cA}{{\mathcal A}}
\newcommand{\cB}{{\mathcal B}}
\newcommand{\cC}{{\mathcal C}}
\newcommand{\cD}{{\mathcal D}}
\newcommand{\cE}{{\mathcal E}}
\newcommand{\cF}{{\mathcal F}}
\newcommand{\cG}{{\mathcal G}}
\newcommand{\cH}{{\mathcal H}}
\newcommand{\cK}{{\mathcal K}}
\newcommand{\cL}{{\mathcal L}}
\newcommand{\cM}{{\mathcal M}}
\newcommand{\cN}{{\mathcal N}}
\newcommand{\cO}{{\mathcal O}}
\newcommand{\cR}{{\mathcal R}}
\newcommand{\cS}{{\mathcal S}}
\newcommand{\cV}{{\mathcal V}}
\newcommand{\cX}{{\mathcal X}}
\newcommand{\fra}{\mathfrak a}
\newcommand{\frb}{\mathfrak b}
\newcommand{\frc}{\mathfrak c}
\newcommand{\frd}{\mathfrak d}
\newcommand{\fre}{\mathfrak e}
\newcommand{\frf}{\mathfrak f}
\newcommand{\frg}{\mathfrak g}
\newcommand{\frh}{\mathfrak h}
\newcommand{\fri}{\mathfrak i}
\newcommand{\frj}{\mathfrak j}
\newcommand{\frk}{\mathfrak k}
\newcommand{\frI}{\mathfrak I}
\newcommand{\fm}{\mathfrak m}
\newcommand{\frn}{\mathfrak n}
\newcommand{\frp}{\mathfrak p}
\newcommand{\fq}{\mathfrak q}
\newcommand{\frr}{\mathfrak r}
\newcommand{\frs}{\mathfrak s}
\newcommand{\frt}{\mathfrak t}
\newcommand{\fru}{\mathfrak u}
\newcommand{\frA}{\mathfrak A}
\newcommand{\frB}{\mathfrak B}
\newcommand{\frF}{\mathfrak F}
\newcommand{\frG}{\mathfrak G}
\newcommand{\frH}{\mathfrak H}
\newcommand{\frJ}{\mathfrak J}
\newcommand{\frN}{\mathfrak N}
\newcommand{\frP}{\mathfrak P}
\newcommand{\frT}{\mathfrak T}
\newcommand{\frU}{\mathfrak U}
\newcommand{\frV}{\mathfrak V}
\newcommand{\frX}{\mathfrak X}
\newcommand{\frY}{\mathfrak Y}
\newcommand{\frZ}{\mathfrak Z}
\newcommand{\rA}{\mathrm{A}}
\newcommand{\rC}{\mathrm{C}}
\newcommand{\rd}{\mathrm{d}}
\newcommand{\rB}{\mathrm{B}}
\newcommand{\rD}{\mathrm{D}}
\newcommand{\rE}{\mathrm{E}}
\newcommand{\rH}{\mathrm{H}}
\newcommand{\rK}{\mathrm{K}}
\newcommand{\rL}{\mathrm{L}}
\newcommand{\rM}{\mathrm{M}}
\newcommand{\rN}{\mathrm{N}}
\newcommand{\rR}{\mathrm{R}}
\newcommand{\rT}{\mathrm{T}}
\newcommand{\rZ}{\mathrm{Z}}
\newcommand{\bbA}{\mathbb A}
\newcommand{\bbB}{\mathbb B}
\newcommand{\bbC}{\mathbb C}
\newcommand{\bbG}{\mathbb G}
\newcommand{\bbF}{\mathbb F}
\newcommand{\bbH}{\mathbb H}
\newcommand{\bbP}{\mathbb P}
\newcommand{\bbN}{\mathbb N}
\newcommand{\bbQ}{\mathbb Q}
\newcommand{\bbR}{\mathbb R}
\newcommand{\bbV}{\mathbb V}
\newcommand{\bbZ}{\mathbb Z}
\newcommand{\adj}{\operatorname{adj}}
\newcommand{\Ad}{\mathrm{Ad}}
\newcommand{\Ann}{\mathrm{Ann}}
\newcommand{\rcris}{\mathrm{cris}}
\newcommand{\ch}{\mathrm{ch}}
\newcommand{\coker}{\mathrm{coker}}
\newcommand{\diag}{\mathrm{diag}}
\newcommand{\Diff}{\mathrm{Diff}}
\newcommand{\Dist}{\mathrm{Dist}}
\newcommand{\rDR}{\mathrm{DR}}
\newcommand{\ev}{\mathrm{ev}}
\newcommand{\Ext}{\mathrm{Ext}}
\newcommand{\cExt}{\mathcal{E}xt}
\newcommand{\fin}{\mathrm{fin}}
\newcommand{\Frac}{\mathrm{Frac}}
\newcommand{\GL}{\mathrm{GL}}
\newcommand{\Hom}{\mathrm{Hom}}
\newcommand{\hd}{\mathrm{hd}}
\newcommand{\rht}{\mathrm{ht}}
\newcommand{\id}{\mathrm{id}}
\newcommand{\im}{\mathrm{im}}
\newcommand{\inc}{\mathrm{inc}}
\newcommand{\ind}{\mathrm{ind}}
\newcommand{\coind}{\mathrm{coind}}
\newcommand{\Lie}{\mathrm{Lie}}
\newcommand{\Max}{\mathrm{Max}}
\newcommand{\mult}{\mathrm{mult}}
\newcommand{\op}{\mathrm{op}}
\newcommand{\ord}{\mathrm{ord}}
\newcommand{\pt}{\mathrm{pt}}
\newcommand{\qt}{\mathrm{qt}}
\newcommand{\rad}{\mathrm{rad}}
\newcommand{\res}{\mathrm{res}}
\newcommand{\rgt}{\mathrm{rgt}}
\newcommand{\rk}{\mathrm{rk}}
\newcommand{\SL}{\mathrm{SL}}
\newcommand{\soc}{\mathrm{soc}}
\newcommand{\Spec}{\mathrm{Spec}}
\newcommand{\St}{\mathrm{St}}
\newcommand{\supp}{\mathrm{supp}}
\newcommand{\Tor}{\mathrm{Tor}}
\newcommand{\Tr}{\mathrm{Tr}}
\newcommand{\wt}{\mathrm{wt}}
\newcommand{\Ab}{\mathbf{Ab}}
\newcommand{\Alg}{\mathbf{Alg}}
\newcommand{\Grp}{\mathbf{Grp}}
\newcommand{\Mod}{\mathbf{Mod}}
\newcommand{\Sch}{\mathbf{Sch}}\newcommand{\bfmod}{{\bf mod}}
\newcommand{\Qc}{\mathbf{Qc}}
\newcommand{\Rng}{\mathbf{Rng}}
\newcommand{\Top}{\mathbf{Top}}
\newcommand{\Var}{\mathbf{Var}}
\newcommand{\gromega}{\langle\omega\rangle}
\newcommand{\lbr}{\begin{bmatrix}}
\newcommand{\rbr}{\end{bmatrix}}
\newcommand{\forb}{\bigcirc\kern-2.8ex \because}
\newcommand{\forbb}{\bigcirc\kern-3.0ex \because}
\newcommand{\forbbb}{\bigcirc\kern-3.1ex \because}
\newcommand{\cd}{commutative diagram }
\newcommand{\SpS}{spectral sequence}
\newcommand\C{\mathbb C}
\newcommand\hh{{\hat{H}}}
\newcommand\eh{{\hat{E}}}
\newcommand\F{\mathbb F}
\newcommand\fh{{\hat{F}}}
\newcommand\Z{{\mathbb Z}}
\newcommand\Zn{\Z_{\geq0}}
\newcommand\et[1]{\tilde{e}_{#1}}
\newcommand\ft[1]{\tilde{f}_{#1}}

\def\ge{\frg}
\def\AA{{\mathcal A}}
\def\al{\alpha}
\def\bq{B_q(\ge)}
\def\bqm{B_q^-(\ge)}
\def\bqz{B_q^0(\ge)}
\def\bqp{B_q^+(\ge)}
\def\beneme{\begin{enumerate}}
\def\beq{\begin{equation}}
\def\beqn{\begin{eqnarray}}
\def\beqnn{\begin{eqnarray*}}
\def\bigsl{{\hbox{\fontD \char'54}}}
\def\bbra#1,#2,#3{\left\{\begin{array}{c}\hspace{-5pt}
#1;#2\\ \hspace{-5pt}#3\end{array}\hspace{-5pt}\right\}}
\def\cd{\cdots}
\def\CC{\mathbb{C}}
\def\CBL{\cB_L(\TY(B,1,n+1))}
\def\CBM{\cB_M(\TY(B,1,n+1))}
\def\CVL{\cV_L(\TY(D,1,n+1))}
\def\CVM{\cV_M(\TY(D,1,n+1))}
\def\ddd{\hbox{\germ D}}
\def\del{\delta}
\def\Del{\Delta}
\def\Delr{\Delta^{(r)}}
\def\Dell{\Delta^{(l)}}
\def\Delb{\Delta^{(b)}}
\def\Deli{\Delta^{(i)}}
\def\Delre{\Delta^{\rm re}}
\def\ei{e_i}
\def\eit{\tilde{e}_i}
\def\eneme{\end{enumerate}}
\def\ep{\epsilon}
\def\eeq{\end{equation}}
\def\eeqn{\end{eqnarray}}
\def\eeqnn{\end{eqnarray*}}
\def\fit{\tilde{f}_i}
\def\FF{{\rm F}}
\def\ft{\tilde{f}_}
\def\gau#1,#2{\left[\begin{array}{c}\hspace{-5pt}#1\\
\hspace{-5pt}#2\end{array}\hspace{-5pt}\right]}
\def\gl{\hbox{\germ gl}}
\def\hom{{\hbox{Hom}}}
\def\ify{\infty}
\def\io{\iota}
\def\kp{k^{(+)}}
\def\km{k^{(-)}}
\def\llra{\relbar\joinrel\relbar\joinrel\relbar\joinrel\rightarrow}
\def\lan{\langle}
\def\lar{\longrightarrow}
\def\max{{\rm max}}
\def\lm{\lambda}
\def\Lm{\Lambda}
\def\mapright#1{\smash{\mathop{\longrightarrow}\limits^{#1}}}
\def\Mapright#1{\smash{\mathop{\Longrightarrow}\limits^{#1}}}
\def\mm{{\bf{\rm m}}}
\def\nd{\noindent}
\def\nn{\nonumber}
\def\nnn{\hbox{\germ n}}
\def\catob{{\mathcal O}(B)}
\def\oint{{\mathcal O}_{\rm int}(\ge)}
\def\ot{\otimes}
\def\op{\oplus}
\def\opi{\ovl\pi_{\lm}}
\def\osigma{\ovl\sigma}
\def\ovl{\overline}
\def\plm{\Psi^{(\lm)}_{\io}}
\def\qq{\qquad}
\def\q{\quad}
\def\qed{\hfill\framebox[2mm]{}}
\def\QQ{\mathbb Q}
\def\qi{q_i}
\def\qii{q_i^{-1}}
\def\ra{\rightarrow}
\def\ran{\rangle}
\def\rlm{r_{\lm}}
\def\ssl{\hbox{\germ sl}}
\def\slh{\widehat{\ssl_2}}
\def\ti{t_i}
\def\tii{t_i^{-1}}
\def\til{\tilde}
\def\tm{\times}
\def\tt{\frt}
\def\TY(#1,#2,#3){#1^{(#2)}_{#3}}
\def\ua{U_{\AA}}
\def\ue{U_{\vep}}
\def\uq{U_q(\ge)}
\def\uqp{U'_q(\ge)}
\def\ufin{U^{\rm fin}_{\vep}}
\def\ufinp{(U^{\rm fin}_{\vep})^+}
\def\ufinm{(U^{\rm fin}_{\vep})^-}
\def\ufinz{(U^{\rm fin}_{\vep})^0}
\def\uqm{U^-_q(\ge)}
\def\uqmq{{U^-_q(\ge)}_{\bf Q}}
\def\uqpm{U^{\pm}_q(\ge)}
\def\uqq{U_{\bf Q}^-(\ge)}
\def\uqz{U^-_{\bf Z}(\ge)}
\def\ures{U^{\rm res}_{\AA}}
\def\urese{U^{\rm res}_{\vep}}
\def\uresez{U^{\rm res}_{\vep,\ZZ}}
\def\util{\widetilde\uq}
\def\uup{U^{\geq}}
\def\ulow{U^{\leq}}
\def\bup{B^{\geq}}
\def\blow{\ovl B^{\leq}}
\def\vep{\varepsilon}
\def\vp{\varphi}
\def\vpi{\varphi^{-1}}
\def\VV{{\mathcal V}}
\def\xii{\xi^{(i)}}
\def\Xiioi{\Xi_{\io}^{(i)}}
\def\W1{W(\varpi_1)}
\def\WW{{\mathcal W}}
\def\wt{{\rm wt}}
\def\wtil{\widetilde}
\def\what{\widehat}
\def\wpi{\widehat\pi_{\lm}}
\def\ZZ{\mathbb Z}
\def\RR{\mathbb R}

\def\m@th{\mathsurround=0pt}
\def\fsquare(#1,#2){
\hbox{\vrule$\hskip-0.4pt\vcenter to #1{\normalbaselines\m@th
\hrule\vfil\hbox to #1{\hfill$\scriptstyle #2$\hfill}\vfil\hrule}$\hskip-0.4pt
\vrule}}

\theoremstyle{definition}
\newtheorem{df}{Definition}[section]
\newtheorem{thm}[df]{Theorem}
\newtheorem{pro}[df]{Proposition}
\newtheorem{lem}[df]{Lemma}
\newtheorem{ex}[df]{Example}
\newtheorem{cor}[df]{Corollary}
\newtheorem{conj}[df]{Conjecture}

\newcommand{\cmt}{\marginpar}
\newcommand{\seteq}{\mathbin{:=}}
\newcommand{\cl}{\colon}
\newcommand{\be}{\begin{enumerate}}
\newcommand{\ee}{\end{enumerate}}
\newcommand{\bnum}{\be[{\rm (i)}]}
\newcommand{\enum}{\ee}
\newcommand{\ro}{{\rm(}}
\newcommand{\rf}{{\rm)}}
\newcommand{\set}[2]{\left\{#1\,\vert\,#2\right\}}
\newcommand{\sbigoplus}{{\mbox{\small{$\bigoplus$}}}}
\newcommand{\ba}{\begin{array}}
\newcommand{\ea}{\end{array}}
\newcommand{\on}{\operatorname}
\newcommand{\eq}{\begin{eqnarray}}
\newcommand{\eneq}{\end{eqnarray}}
\newcommand{\hs}{\hspace*}

\title[Ultra-discretization 
of the $\TY(A,1,n)$-Geometric Crystal]
{ $\TY(A,1,n)$-Geometric Crystal corresponding to
Dynkin index $i=2$ and its ultra-discretization}
\author{Kailash C. M\textsc{isra}}
\address{Department of Mathematics,
North Carolina State University, Raleigh, NC 27695-8205, USA}
\email{misra@ncsu.edu}
\author{Toshiki N\textsc{akashima}}
\address{Department of Mathematics, 
Sophia University, Kioicho 7-1, Chiyoda-ku, Tokyo 102-8554,
Japan}
\email{toshiki@sophia.ac.jp}
\thanks{KCM: supported in part by Simon Foundation Grant 208092 and NSA Grant H98230-12-1-0248.
TN: supported in part by JSPS Grants in Aid for 
Scientific Research $\sharp 22540031$.}

\subjclass[2000]{Primary 17B37; 17B67; Secondary 22E65; 14M15}
\date{}

\dedicatory{Dedicated to Professor Michio Jimbo on the occasion of 
his 60th birthday}

\keywords{geometric crystal, perfect crystal, 
ultra-discretization. }

\begin{abstract}
Let $\ge$ be an affine Lie algebra with index set $I = \{0, 1, 2, \cdots , n\}$ and $\ge^L$ be its Langlands dual. It is conjectured in \cite{KNO} that  for each $i \in I \setminus \{0\}$ the affine Lie algebra $\ge$ has a positive geometric crystal whose ultra-discretization is isomorphic to the limit of certain coherent family of perfect crystals for $\ge^L$. We prove this conjecture for $i=2$ and $\ge =  \TY(A,1,n)$. 

\end{abstract}

\maketitle
\renewcommand{\thesection}{\arabic{section}}
\section{Introduction}
\setcounter{equation}{0}
\renewcommand{\theequation}{\thesection.\arabic{equation}}

Let $A= (a_{ij})_{i,j \in I}, I = \{0, 1, \cdots , n\}$ be an affine
Cartan matrix and $(A, \{\al_i\}_{i \in I}, $\\
$\{\al^\vee_i\}_{\i\in I})$ be a given Cartan datum. 
Let $\ge = \ge(A)$ denote the associated affine Lie algebra \cite{Kac} 
and $U_q(\ge)$ denote the corresponding quantum affine algebra. 
Let $P= \Z \Lambda_0 \oplus \Z \Lambda_1\oplus 
\cdots \oplus \Z \Lambda_n \oplus \Z\delta$ and  
$P^\vee = \Z \al^\vee_0 \oplus \Z \al^\vee_1 \oplus \cdots \oplus 
\Z \al^\vee_n \oplus \Z d $ denote the affine weight lattice 
and the dual affine weight lattice respectively. 
For a dominant weight  $\lambda \in P^+ = \{\mu \in P \mid \mu (h_i) 
\geq 0 \quad  {\rm for \ \  all} \quad i \in I \}$ of level 
$l = \lambda (c)$ ($c =$ canonical central element), 
Kashiwara defined the crystal base $(L(\lambda), B(\lambda))$
\cite{Kas1} 
for the integrable highest weight $U_q(\ge)$-module $V(\lambda)$. 
The crystal $B(\lambda)$ is the $q= 0$ limit of the canonical basis 
\cite{Lu} or the global crystal basis \cite{Kas2}. 
It has many interesting combinatorial properties. 
To give explicit realization of the crystal $B(\lambda)$, 
the notion of affine crystal and perfect crystal has been introduced 
in \cite{KMN1}. In particular, it is shown in \cite{KMN1} that 
the affine crystal $B(\lambda)$ for the level $l \in \Z_{>0}$ 
integrable highest weight $U_q(\ge)$-module $V(\lambda)$ can be 
realized as the semi-infinite tensor product $\cdots \otimes B_l \otimes
B_l \otimes B_l$, 
where $B_l$ is a perfect crystal of level $l$. 
This is known as the path realization. 
Subsequently it is noticed in \cite{KKM} that one needs 
a coherent family of perfect crystals $\{B_l\}_{l \geq 1}$ 
in order to give a path realization of the Verma module $M(\lambda)$ 
( or $U_q^-(\ge)$). In particular, 
the crystal $B(\infty)$ of $U_q^-(\ge)$ can be realized as the
semi-infinite tensor product $\cdots \otimes B_{\infty} \otimes 
B_{\infty} \otimes B_{\infty}$ where $B_{\infty}$ is the limit of 
the coherent family of perfect crystals $\{B_l\}_{l \geq 1}$ (see
\cite{KKM}). 
At least one coherent family $\{B_l\}_{l \geq 1}$ of perfect crystals 
and its limit is known for $\ge = A_n^{(1)}, 
B_n^{(1)}, C_n^{(1)}, D_n^{(1)}, A_{2n-1}^{(2)}, A_{2n}^{(2)}, 
D_{n+1}^{(2)}, D_4^{(3)}, G_2^{(1)} $ 
(see \cite{KMN2,KKM,Y,KMOY,MMO}).

A perfect crystal is indeed a crystal for certain finite dimensional 
module called Kirillov-Reshetikhin module (KR-module for short) 
of the quantum affine algebra $U_q(\ge)$ (\cite{KR}, \cite{HKOTY,HKOTT}). 
The KR-modules are parametrized by two integers $(i, l)$, where 
$i \in I \setminus \{0\}$ and $l$ any positive integer. 
Let $\{\varpi_i\}_{i\in I\setminus\{0\}}$ be the set of level $0$ 
fundamental weights \cite{K0} . Hatayama et al  
(\cite{HKOTY,HKOTT}) conjectured that any KR-module
$W(l\varpi_i)$ 
admit a crystal base $B^{i,l}$ in the sense of Kashiwara 
and furthermore $B^{i,l}$ is perfect if $l$ is a multiple of 
$c_i^\vee\seteq \mathrm{max }(1,\frac{2}{(\al_i,\al_i)})$. 
This conjecture has been proved for quantum affine algebras 
$U_q(\ge)$ of classical types (\cite{OS,FOS1,FOS2}). 
When $\{B^{i,l}\}_{l\geq 1}$ is a coherent family of perfect crystals 
we denote its limit by $B_\infty (\varpi_i)$ (or just $B_\infty$ if there is no confusion).

On the other hand the notion of geometric crystal is introduced in
\cite{BK} 
as a geometric analog to Kashiwara's crystal (or algebraic crystal)
\cite{Kas1}. 
In fact, geometric crystal is defined in \cite{BK} for reductive
algebraic groups 
and is extended to general Kac-Moody groups in \cite{N}. 
For a given Cartan datum $(A, \{\alpha_i\}_{i \in I},
\{\al^\vee_i\}_{\i\in I} )$, 
the geometric crystal is defined as a quadruple 
$\cV(\ge)=(X, \{e_i\}_{i \in I}, \{\gamma_i\}_{i \in I}, 
\{\vep_i\}_{i\in I})$, 
where $X$ is an algebraic variety,  $e_i:\bbC^\times\times
X\longrightarrow X$ 
are rational $\bbC^\times$-actions and  
$\gamma_i,\vep_i:X\longrightarrow 
\bbC$ $(i\in I)$ are rational functions satisfying certain conditions  
( see Definition \ref{def-gc}). 
A geometric 
crystal is said to be a positive geometric crystal 
if it admits a positive structure (see Definition 2.5).
A remarkable relation between positive geometric crystals 
and algebraic crystals is the ultra-discretization functor $\mathcal
{UD}$ 
between them (see Section 2.4). Applying this functor, positive rational 
functions are transfered to piecewise linear 
functions by the simple correspondence:
$$
x \times y \longmapsto x+y, \qquad \frac{x}{y} \longmapsto x - y, 
\qquad x + y \longmapsto {\rm max}\{x, y\}.
$$

It was conjectured in \cite{KNO} that for each affine Lie algebra $\ge$ and 
each Dynkin index $i \in I \setminus {0}$, there exists a positive geometric crystal
$\cV(\ge)=(X, \{e_i\}_{i \in I}, \{\gamma_i\}_{i \in I}, 
\{\vep_i\}_{i\in I})$ whose ultra-discretization $\mathcal{UD}(\cV)$ is isomorphic 
to the limit $B_{\infty}$ of a coherent family of perfect crystals for the Langlands dual $\ge^L$.
In \cite{KNO},  it has been shown that this conjecture is true for $i=1$ and $\ge = A_n^{(1)}, 
B_n^{(1)}, C_n^{(1)}, D_n^{(1)}, A_{2n-1}^{(2)}, A_{2n}^{(2)},
D_{n+1}^{(2)}$. In \cite{N3} (resp. \cite{IN}) 
a positive geometric crystal for $\ge = G_2^{(1)}$ (resp. $\ge = D_4^{(3)}$) and $i=1$ has been
constructed and it is shown in \cite{N4} (resp. \cite{IMN}) that 
the  ultra-discretization of this positive geometric crystal is 
isomorphic to the limit of a coherent family of perfect crystals for 
$\ge ^L= D_4^{(3)}$ (resp. $\ge^L = G_2^{(1)}$) given in \cite{KMOY} (resp. \cite{MMO}). 

In this paper we have constructed a positive geometric crystal associated
with the Dynkin index $i=2$ for the affine Lie algebra $A_n^{(1)}$ and have
proved that its ultra-discretization is isomorphic to the limit $B^{2,\infty}$
of the coherent family of perfect crystals $\{B^{2,l}\}_{l \geq 1}$ for the affine 
Lie algebra $A_n^{(1)}$ given in (\cite{KMN2,OSS}).

This paper is organized as follows. In Section 2, 
we recall necessary definitions and facts about geometric crystals. 
In Section 3, we recall from \cite{OSS} (see also \cite{KMN2}) the coherent family of perfect crystals 
$\{B^{2,l}\}_{l \geq 1}$for $\ge = A_n^{(1)}$ and its limit $B^{2,\infty}$. In Sections 4, we construct a
positive affine geometric crystal $\cV =\cV(A_n^{(1)})$ explicitly. 
In Section 5, we prove that the ultra-discretization $\cX=\mathcal{UD}(\cV)$ is isomorphic
to the limit $B^{2,\infty}$ which proves the conjecture in (\cite{KNO}, Conjecture 1.2) for $i=2$ and $\ge =  \TY(A,1,n)$. 

\newpage
\renewcommand{\thesection}{\arabic{section}}
\section{Geometric crystals}
\setcounter{equation}{0}
\renewcommand{\theequation}{\thesection.\arabic{equation}}

In this section, 
we review Kac-Moody groups and geometric crystals
following  \cite{BK,Ku2,N,PK}.
\subsection{Kac-Moody algebras and Kac-Moody groups}
\label{KM}
Fix a symmetrizable generalized Cartan matrix
 $A=(a_{ij})_{i,j\in I}$ with a finite index set $I$.
Let $(\tt,\{\al_i\}_{i\in I},\\
\{\al^\vee_i\}_{i\in I})$ 
be the associated
root data, where ${\tt}$ is a vector space 
over $\bbC$ and
$\{\al_i\}_{i\in I}\subset\tt^*$ and 
$\{\al^\vee_i\}_{i\in I}\subset\tt$
are linearly independent 
satisfying $\al_j(\al^\vee_i)=a_{ij}$.

The Kac-Moody Lie algebra $\ge=\ge(A)$ associated with $A$
is the Lie algebra over $\bbC$ generated by $\tt$, the 
Chevalley generators $e_i$ and $f_i$ $(i\in I)$
with the usual defining relations (\cite{KP,PK}).
There is the root space decomposition 
$\ge=\bigoplus_{\al\in \tt^*}\ge_{\al}$.
Denote the set of roots by 
$\Delta:=\{\al\in \tt^*|\al\ne0,\,\,\ge_{\al}\ne(0)\}$.
Set $Q=\sum_i\bbZ \al_i$, $Q_+=\sum_i\bbZ_{\geq0} \al_i$,
$Q^\vee:=\sum_i\bbZ \al^\vee_i$
and $\Delta_+:=\Delta\cap Q_+$.
An element of $\Delta_+$ is called 
a {\it positive root}.
Let $P\subset \tt^*$ be a weight lattice such that 
$\bbC\ot P=\tt^*$, whose element is called a
weight.

Define simple reflections $s_i\in{\rm Aut}(\tt)$ $(i\in I)$ by
$s_i(h):=h-\al_i(h)\al^\vee_i$, which generate the Weyl group $W$.
It induces the action of $W$ on $\tt^*$ by
$s_i(\lm):=\lm-\lm(\al^\vee_i)\al_i$.
Set $\Delre:=\{w(\al_i)|w\in W,\,\,i\in I\}$, whose element 
is called a real root.

Let $\ge'$ be the derived Lie algebra 
of $\ge$ and let 
$G$ be the Kac-Moody group associated 
with $\ge'$(\cite{PK}).
Let $U_{\al}:=\exp\ge_{\al}$ $(\al\in \Delre)$
be the one-parameter subgroup of $G$.
The group $G$ is generated by $U_{\al}$ $(\al\in \Delre)$.
Let $U^{\pm}$ be the subgroup generated by $U_{\pm\al}$
($\al\in \Delre_+=\Delre\cap Q_+$), {\it i.e.,}
$U^{\pm}:=\lan U_{\pm\al}|\al\in\Del^{\rm re}_+\ran$.

For any $i\in I$, there exists a unique homomorphism;
$\phi_i:SL_2(\bbC)\rightarrow G$ such that
\[
\hspace{-2pt}\phi_i\left(
\left(
\begin{array}{cc}
c&0\\
0&c^{-1}
\end{array}
\right)\right)=c^{\al^\vee_i},\,
\phi_i\left(
\left(
\begin{array}{cc}
1&t\\
0&1
\end{array}
\right)\right)=\exp(t e_i),\,
 \phi_i\left(
\left(
\begin{array}{cc}
1&0\\
t&1
\end{array}
\right)\right)=\exp(t f_i).
\]
where $c\in\bbC^\times$ and $t\in\bbC$.
Set $\al^\vee_i(c):=c^{\al^\vee_i}$,
$x_i(t):=\exp{(t e_i)}$, $y_i(t):=\exp{(t f_i)}$, 
$G_i:=\phi_i(SL_2(\bbC))$,
$T_i:=\phi_i(\{{\rm diag}(c,c^{-1})\vert 
c\in\bbC^{\vee}\})$ 
and 
$N_i:=N_{G_i}(T_i)$. Let
$T$ (resp. $N$) be the subgroup of $G$ 
with the Lie algebra $\tt$
(resp. generated by the $N_i$'s), 
which is called a {\it maximal torus} in $G$, and let
$B^{\pm}=U^{\pm}T$ be the Borel subgroup of $G$.
We have the isomorphism
$\phi:W\mapright{\sim}N/T$ defined by $\phi(s_i)=N_iT/T$.
An element $\ovl s_i:=x_i(-1)y_i(1)x_i(-1)
=\phi_i\left(
\left(
\begin{array}{cc}
0&\pm1\\
\mp1&0
\end{array}
\right)\right)$ is in 
$N_G(T)$, which is a representative of 
$s_i\in W=N_G(T)/T$. 

\subsection{Geometric crystals}
Let $X$ be an ind-variety , 
{$\gamma_i:X\rightarrow \bbC$} and 
$\vep_i:X\longrightarrow \bbC$ ($i\in I$) 
rational functions on $X$, and
{$e_i:\bbC^\times \times X\longrightarrow X$}
$((c,x)\mapsto e^c_i(x))$ a
rational $\bbC^\times$-action.

\begin{df}
\label{def-gc}
A quadruple $(X,\{e_i\}_{i\in I},\{\gamma_i,\}_{i\in I},
\{\vep_i\}_{i\in I})$ is a 
$G$ (or $\ge$)-\\{\it geometric} {\it crystal} 
if
\begin{enumerate}
\item
$\{1\}\times X\subset dom(e_i)$ 
for any $i\in I$.
\item
$\gamma_j(e^c_i(x))=c^{a_{ij}}\gamma_j(x)$.
\item $e_i$'s satisfy the following relations.
\[
 \begin{array}{lll}
&\hspace{-20pt}e^{c_1}_{i}e^{c_2}_{j}
=e^{c_2}_{j}e^{c_1}_{i}&
{\rm if }\,\,a_{ij}=a_{ji}=0,\\
&\hspace{-20pt} e^{c_1}_{i}e^{c_1c_2}_{j}e^{c_2}_{i}
=e^{c_2}_{j}e^{c_1c_2}_{i}e^{c_1}_{j}&
{\rm if }\,\,a_{ij}=a_{ji}=-1,\\
&\hspace{-20pt}
e^{c_1}_{i}e^{c^2_1c_2}_{j}e^{c_1c_2}_{i}e^{c_2}_{j}
=e^{c_2}_{j}e^{c_1c_2}_{i}e^{c^2_1c_2}_{j}e^{c_1}_{i}&
{\rm if }\,\,a_{ij}=-2,\,
a_{ji}=-1,\\
&\hspace{-20pt}
e^{c_1}_{i}e^{c^3_1c_2}_{j}e^{c^2_1c_2}_{i}
e^{c^3_1c^2_2}_{j}e^{c_1c_2}_{i}e^{c_2}_{j}
=e^{c_2}_{j}e^{c_1c_2}_{i}e^{c^3_1c^2_2}_{j}e^{c^2_1c_2}_{i}
e^{c^3_1c_2}_je^{c_1}_i&
{\rm if }\,\,a_{ij}=-3,\,
a_{ji}=-1,
\end{array}
\]
\item
$\vep_i(e_i^c(x))=c^{-1}\vep_i(x)$ and $\vep_i(e_j^c(x))=\vep_i(x)$ if 
$a_{i,j}=a_{j,i}=0$.
\end{enumerate}
\end{df}

The condition (iv) is slightly modified from the one in 
\cite{IN,N3,N4}.

Let $W$ be the  Weyl group associated with $\ge$. 
For $w \in W$ define $R(w)$  by
\[
 R(w):=\{(i_1,i_2,\cd,i_l)\in I^l|w=s_{i_1}s_{i_2}\cd s_{i_l}\},
\]
where $l$ is the length of $w$.
Then $R(w)$ is the set of reduced words of $w$.
For a word ${\bf i}=(i_1,\cd,i_l)\in R(w)$ 
$(w\in W)$, set 
$\al^{(j)}:=s_{i_l}\cd s_{i_{j+1}}(\al_{i_j})$ 
$(1\leq j\leq l)$ and 
\begin{eqnarray*}
e_{\bf i}:&T\times X\rightarrow &X\\
&(t,x)\mapsto &e_{\bf i}^t(x):=e_{i_1}^{\al^{(1)}(t)}
e_{i_2}^{\al^{(2)}(t)}\cd e_{i_l}^{\al^{(l)}(t)}(x).
\label{tx}
\end{eqnarray*}
Note that the condition (iii) above is 
equivalent to the following:
{$e_{\bf i}=e_{\bf i'}$}
for any 
$w\in W$, ${\bf i}$.
${\bf i'}\in R(w)$.

\subsection{Geometric crystal on Schubert cell}
\label{schubert}

Let $w\in W$ be a Weyl group element and take a 
reduced expression $w=s_{i_1}\cd s_{i_l}$. 
Let $X:=G/B$ be the flag
variety, which is an ind-variety 
and $X_w\subset X$ the
Schubert cell associated with $w$, which has 
a natural geometric crystal structure
(\cite{BK,N}).
For ${\bf i}:=(i_1,\cd,i_k)$, set 
\begin{equation}
B_{\bf i}^-
:=\{Y_{\bf i}(c_1,\cd,c_k)
:=Y_{i_1}(c_1)\cd Y_{i_k}(c_k)
\,\vert\, c_1\cd,c_k\in\bbC^\times\}\subset B^-,
\label{bw1}
\end{equation}
where $Y_i(c):=y_i(\frac{1}{c})\al^\vee_i(c)$.
If $I=\{i_1,\cd,i_k\}$, this has a geometric crystal structure(\cite{N})
isomorphic to $X_w$. 
The explicit forms of the action $e^c_i$, the rational 
function $\vep_i$  and $\gamma_i$ on 
$B_{\bf i}^-$ are given by
\begin{eqnarray}
&& e_i^c(Y_{\bf i}(c_1,\cd,c_k))
=Y_{\bf i}({\mathcal C}_1,\cd,{\mathcal C}_k)),\nn \\
&&\text{where}\nn\\
&&{\mathcal C}_j:=
c_j\cdot \frac{\displaystyle \sum_{1\leq m\leq j,i_m=i}
 \frac{c}
{c_1^{a_{i_1,i}}\cd c_{m-1}^{a_{i_{m-1},i}}c_m}
+\sum_{j< m\leq k,i_m=i} \frac{1}
{c_1^{a_{i_1,i}}\cd c_{m-1}^{a_{i_{m-1},i}}c_m}}
{\displaystyle\sum_{1\leq m<j,i_m=i} 
 \frac{c}
{c_1^{a_{i_1,i}}\cd c_{m-1}^{a_{i_{m-1},i}}c_m}+
\mathop\sum_{j\leq m\leq k,i_m=i}  \frac{1}
{c_1^{a_{i_1,i}}\cd c_{m-1}^{a_{i_{m-1},i}}c_m}},
\label{eici}\\
&& \vep_i(Y_{\bf i}(c_1,\cd,c_k))=
\sum_{1\leq m\leq k,i_m=i} \frac{1}
{c_1^{a_{i_1,i}}\cd c_{m-1}^{a_{i_{m-1},i}}c_m},
\label{vep-i}\\
&&\gamma_i(Y_{\bf i}(c_1,\cd,c_k))
=c_1^{a_{i_1,i}}\cd c_k^{a_{i_k,i}}.
\label{gamma-i}
\end{eqnarray}
{\sl Remark.}
As in \cite{N}, the above setting requires the condition
$I=\{i_1,\cd,i_k\}$.
Otherwise, set $J:=\{i_1,\cd,i_k\}\subsetneq I$ and let $\ge_J\subsetneq \ge$ 
be the corresponding subalgebra.
Then, by arguing similarly to \cite[4.3]{N}, we can define the $\ge_J$-geometric crystal 
structure on $B^-_{\bf i}$.
\subsection{Positive structure,\,\,
Ultra-discretizations \,\, and \,\,Tropicalizations}
\label{positive-str}

Let us recall the notions of 
positive structure, ultra-discretization and tropicalization.

The setting below is same as in \cite{KNO}.
Let $T=(\bbC^\times)^l$ be an algebraic torus over $\bbC$ and 
$X^*(T):={\rm Hom}(T,\bbC^\times)\cong \ZZ^l$ 
(resp. $X_*(T):={\rm Hom}(\bbC^\times,T)\cong \ZZ^l$) 
be the lattice of characters
(resp. co-characters)
of $T$. 
Set $R:=\bbC(c)$ and define
$$
\begin{array}{cccc}
v:&R\setminus\{0\}&\longrightarrow &\ZZ\\
&f(c)&\mapsto
&{\rm deg}(f(c)),
\end{array}
$$
where $\rm deg$ is the degree of poles at $c=\ify$. 
Here note that for $f_1,f_2\in R\setminus\{0\}$, we have
\begin{equation}
v(f_1 f_2)=v(f_1)+v(f_2),\q
v\left(\frac{f_1}{f_2}\right)=v(f_1)-v(f_2)
\label{ff=f+f}
\end{equation}
A non-zero rational function on
an algebraic torus $T$ is called {\em positive} if
it can be written as $g/h$ where
$g$ and $h$ are positive linear combinations of
characters of $T$.
\begin{df}
Let 
$f\cl T\rightarrow T'$ be 
a rational morphism between
two algebraic tori $T$ and 
$T'$.
We say that $f$ is {\em positive},
if $\eta\circ f$ is positive
for any character $\eta\cl T'\to \C$.
\end{df}
Denote by ${\rm Mor}^+(T,T')$ the set of 
positive rational morphisms from $T$ to $T'$.

\begin{lem}[\cite{BK}]
\label{TTT}
For any $f\in {\rm Mor}^+(T_1,T_2)$             
and $g\in {\rm Mor}^+(T_2,T_3)$, 
the composition $g\circ f$
is well-defined and belongs to ${\rm Mor}^+(T_1,T_3)$.
\end{lem}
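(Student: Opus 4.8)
The plan is to split this lemma into two independent assertions: that $g\circ f$ is a well-defined rational morphism $T_1\to T_3$, and that $\eta\circ(g\circ f)$ is a positive rational function on $T_1$ for every character $\eta$ of $T_3$. For the latter I would rely on the elementary fact that, on a fixed torus, the positive rational functions are closed under addition with positive coefficients, under multiplication, and under division; positivity then simply propagates through the algebraic operations that express $g\circ f$ in terms of $f$ and the coordinate functions of $g$. Concretely, fix an isomorphism $T_3\cong(\bbC^\times)^m$ with coordinate characters $\eta_1,\dots,\eta_m$; by the definition of positive rational morphism given above it suffices to treat each $\eta_k\circ(g\circ f)$, and writing $g_k:=\eta_k\circ g=p_k/q_k$ with $p_k,q_k$ positive linear combinations of characters of $T_2$, one has $\eta_k\circ(g\circ f)=(p_k\circ f)/(q_k\circ f)$.

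For the well-definedness I would pass to the totally positive locus. For $T=(\bbC^\times)^l$ put $T_{>0}:=(\RR_{>0})^l$; a positive linear combination of characters takes strictly positive real values on $T_{>0}$, so any positive rational function $g/h$ restricts there to an honest $\RR_{>0}$-valued function, its denominator having no zero on $T_{>0}$. Hence a positive rational morphism restricts to a genuine map between totally positive loci: $f$ sends $T_{1,>0}$ into $T_{2,>0}$, on which $g$ is everywhere defined, so $g\circ f$ is defined throughout $T_{1,>0}$. Since $\RR_{>0}$ is infinite it is Zariski dense in $\bbC^\times$, hence $T_{1,>0}$ is Zariski dense in $T_1$, and therefore $g\circ f$ is defined on a dense open subset, that is, it is a well-defined rational morphism. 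For the positivity, write $p_k=\sum_j c_j\chi_j$ with $c_j>0$ and $\chi_j$ characters of $T_2$, so that $p_k\circ f=\sum_j c_j(\chi_j\circ f)$ with each $\chi_j\circ f$ positive because $f$ is positive; using that a product of positive linear combinations of characters of $T_1$ is again one (characters multiply to characters, positive reals to positive reals), one clears denominators to see that a positive-coefficient sum of quotients of positive combinations of characters, and then a quotient of two such, is again of that form. Applying this to $p_k\circ f$, to $q_k\circ f$, and finally to their quotient shows $\eta_k\circ(g\circ f)$ is positive, whence $g\circ f\in{\rm Mor}^+(T_1,T_3)$.

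The only genuine subtlety — the step where positivity is really used rather than being a formality — is the well-definedness: a priori the image of $f$ could lie in the indeterminacy locus of $g$, and it is precisely the nonemptiness and Zariski density of the totally positive locus that rules this out. Everything else is the routine verification that the positive rational functions on a torus are closed under sums with positive coefficients, products, and quotients; the argument follows \cite{BK}.
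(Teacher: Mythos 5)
The paper itself gives no proof of this lemma: it is quoted verbatim from \cite{BK}, so there is no internal argument to compare yours against, and your proposal has to be judged on its own merits. It is correct, and it is essentially the standard argument. The one genuinely non-formal point is well-definedness of the composition of rational maps, and your use of the totally positive locus handles it properly: subtraction-free numerators and denominators are strictly positive on $T_{1,>0}=(\RR_{>0})^{l}$, so $f$ is defined on all of $T_{1,>0}$ with values in $T_{2,>0}$, where $g$ is defined; hence the (open) locus where $g\circ f$ is defined contains a nonempty -- indeed Zariski dense -- set, and the identity $\eta_k\circ(g\circ f)=(p_k\circ f)/(q_k\circ f)$ holds because $q_k\circ f$ is not identically zero, again by positivity on $T_{1,>0}$. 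The positivity part is, as you say, the routine closure of positive rational functions under positive linear combinations, products and quotients. The only small point worth making explicit is your reduction to the coordinate characters $\eta_1,\dots,\eta_m$: a general character of $T_3$ is a Laurent monomial $\prod_k\eta_k^{a_k}$ with $a_k\in\ZZ$, so its composition with $g\circ f$ is a product of integer powers of the $\eta_k\circ(g\circ f)$, and positivity follows from the closure under products and inverses that you have already recorded; with that sentence added the proof is complete and matches the argument one finds in \cite{BK}.
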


By Lemma \ref{TTT}, we can define a category ${\mathcal T}_+$
whose objects are algebraic tori over $\bbC$ and arrows
are positive rational morphisms.

Let $f\cl T\rightarrow T'$ be a 
positive rational morphism
of algebraic tori $T$ and 
$T'$.
We define a map $\what f\cl X_*(T)\rightarrow X_*(T')$ by 
\[
\langle\eta,\what f(\xi)\rangle
=v(\eta\circ f\circ \xi),
\]
where $\eta\in X^*(T')$ and $\xi\in X_*(T)$.
\begin{lem}[\cite{BK}]
For any algebraic tori $T_1$, $T_2$, $T_3$, 
and positive rational morphisms 
$f\in {\rm Mor}^+(T_1,T_2)$, 
$g\in {\rm Mor}^+(T_2,T_3)$, we have
$\what{g\circ f}=\what g\circ\what f.$
\end{lem}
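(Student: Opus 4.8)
The plan is to prove the identity by pairing both sides against characters and co-characters, thereby reducing the whole statement to a single elementary computation involving the valuation $v$ and the positivity hypothesis. First I would fix $\xi\in X_*(T_1)$ and $\zeta\in X^*(T_3)$ and unwind the definitions: on the left $\langle\zeta,\what{g\circ f}(\xi)\rangle=v(\zeta\circ g\circ f\circ\xi)$ — which makes sense because $g\circ f$ is itself positive by Lemma \ref{TTT}, so that $\what{g\circ f}$ is defined — while on the right $\langle\zeta,(\what g\circ\what f)(\xi)\rangle=\langle\zeta,\what g(\what f(\xi))\rangle=v(\zeta\circ g\circ\what f(\xi))$, where $\what f(\xi)\in X_*(T_2)$ is now regarded as the monomial co-character $\bbC^\times\to T_2$, $c\mapsto(c^{d_1},\dots,c^{d_m})$, with $d_j:=\langle\eta_j,\what f(\xi)\rangle$ and $\eta_1,\dots,\eta_m$ the coordinate characters of $T_2=(\bbC^\times)^m$. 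Since by construction $d_j=v(\eta_j\circ f\circ\xi)$ and also $d_j=v(\eta_j\circ\what f(\xi))$ (because $\eta_j\circ\what f(\xi)=c^{d_j}$), it suffices to prove: $v(\zeta\circ g\circ t)$ depends only on the integer vector $(v(\eta_1\circ t),\dots,v(\eta_m\circ t))$ as $t$ ranges over positive rational morphisms $\bbC^\times\to T_2$. Granting this, the proof is finished, since $f\circ\xi$ and $\what f(\xi)$ are two such morphisms with the same associated vector $(d_1,\dots,d_m)$.

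The core step is the following. Write the positive rational function $\vp:=\zeta\circ g$ on $T_2$ as $\vp=P/Q$ with $P=\sum_\al p_\al\,\eta^\al$ and $Q=\sum_\be q_\be\,\eta^\be$, where $\eta^\al:=\prod_j\eta_j^{\al_j}$ for $\al\in X^*(T_2)\cong\ZZ^m$ and all the coefficients $p_\al,q_\be$ are positive. Given a positive rational morphism $t\cl\bbC^\times\to T_2$, each coordinate $t_j:=\eta_j\circ t$ is a ratio of Laurent polynomials in $c$ with positive coefficients, hence near $c=\ify$ it has the form $t_j(c)=a_j\,c^{d_j}(1+o(1))$ with $a_j>0$ and $d_j=v(t_j)$. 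Substituting, $\eta^\al\circ t=(\prod_j a_j^{\al_j})\,c^{\langle\al,d\rangle}(1+o(1))$; because all $p_\al$ and all $\prod_j a_j^{\al_j}$ are strictly positive, the top-degree terms of $P\circ t=\sum_\al p_\al(\eta^\al\circ t)$ cannot cancel, so $P\circ t$ is not identically zero and $v(P\circ t)=\max_\al\langle\al,d\rangle$, and likewise $v(Q\circ t)=\max_\be\langle\be,d\rangle$. By the multiplicativity of $v$ recorded in \eqref{ff=f+f}, $v(\vp\circ t)=v(P\circ t)-v(Q\circ t)=\max_\al\langle\al,d\rangle-\max_\be\langle\be,d\rangle$, which depends only on $d=(d_1,\dots,d_m)$. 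Applying this with $t=f\circ\xi$ and then with $t=\what f(\xi)$ gives $v(\zeta\circ g\circ f\circ\xi)=v(\zeta\circ g\circ\what f(\xi))$ for all $\xi$ and $\zeta$, which is precisely the asserted equality $\what{g\circ f}=\what g\circ\what f$.

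The only genuine obstacle is the no-cancellation claim, and this is exactly the point at which positivity is essential: the hypotheses that $f$ and $g$ are positive force all the leading coefficients $a_j$ and all the coefficients $p_\al,q_\be$ to be positive real numbers, so the leading terms in the substituted numerator and denominator add up rather than cancelling, making $v$ computable by the naive maximum of degrees (and, at the same time, ruling out degeneracies such as a substituted polynomial vanishing identically, which is what guarantees the compositions are well defined). Everything else is routine bookkeeping: that a co-character is a positive morphism, that a composition of positive morphisms is positive (Lemma \ref{TTT}), and that the lattice point $\what f(\xi)$, reinterpreted as the monomial curve $c\mapsto(c^{d_1},\dots,c^{d_m})$, satisfies $v(\eta_j\circ\what f(\xi))=d_j=\langle\eta_j,\what f(\xi)\rangle$.
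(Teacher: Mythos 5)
The paper gives no proof of this lemma at all—it is quoted from \cite{BK}—so there is nothing internal to compare against; judged on its own, your argument is correct and is essentially the standard Berenstein--Kazhdan one. Reducing to the pairing $\langle\zeta,\cdot\rangle$, writing $\zeta\circ g=P/Q$ with positive coefficients, and observing that positivity forces the leading terms at $c=\infty$ to add rather than cancel (so $v(P\circ t)=\max_\al\langle\al,d\rangle$ and $v(\zeta\circ g\circ t)$ depends only on the degree vector $d$, which agrees for $t=f\circ\xi$ and for the monomial co-character $\what f(\xi)$) is exactly the right mechanism, and it also correctly accounts for why the compositions are well defined.
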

Let ${\hbox{\germ Set}}$ denote the category of sets with the morphisms being set maps.
By the above lemma, we obtain a functor: 
\[
\begin{array}{cccc}
{\mathcal{UD}}:&{\mathcal T}_+&\longrightarrow &{{\hbox{\germ Set}}}\\
&T&\mapsto& X_*(T)\\
&(f:T\rightarrow T')&\mapsto& 
(\what f:X_*(T)\rightarrow X_*(T')))
\end{array}
\]

\begin{df}[\cite{BK}]
Let $\chi=(X,\{e_i\}_{i\in I},\{{\rm wt}_i\}_{i\in I},
\{\vep_i\}_{i\in I})$ be a 
geometric crystal, $T'$ an algebraic torus
and $\theta:T'\rightarrow X$ 
a birational isomorphism.
The isomorphism $\theta$ is called 
{\it positive structure} on
$\chi$ if it satisfies
\begin{enumerate}
\item for any $i\in I$ the rational functions
$\gamma_i\circ \theta:T'\rightarrow \bbC$ and 
$\vep_i\circ \theta:T'\rightarrow \bbC$ 
are positive.
\item
For any $i\in I$, the rational morphism 
$e_{i,\theta}:\bbC^\tm \tm T'\rightarrow T'$ defined by
$e_{i,\theta}(c,t)
:=\theta^{-1}\circ e_i^c\circ \theta(t)$
is positive.
\end{enumerate}
\end{df}
Let $\theta:T\rightarrow X$ be a positive structure on 
a geometric crystal $\chi=(X,\{e_i\}_{i\in I},$
$\{{\rm wt}_i\}_{i\in I},
\{\vep_i\}_{i\in I})$.
Applying the functor ${\mathcal{UD}}$ 
to positive rational morphisms
$e_{i,\theta}:\bbC^\tm \tm T\rightarrow T$ and
$\gamma_i\circ \theta,\vep_i\circ\theta:T\ra \bbC$
(the notations are
as above), we obtain
\begin{eqnarray*}
\til e_i&:=&{\mathcal{UD}}(e_{i,\theta}):
\ZZ\tm X_*(T) \rightarrow X_*(T)\\
{\rm wt}_i&:=&{\mathcal{UD}}(\gamma_i\circ\theta):
X_*(T')\rightarrow \bbZ,\\
\vep_i&:=&{\mathcal{UD}}(\vep_i\circ\theta):
X_*(T')\rightarrow \bbZ.
\end{eqnarray*}
Now, for given positive structure $\theta:T'\rightarrow X$
on a geometric crystal 
$\chi=(X,\{e_i\}_{i\in I},$
$\{{\rm wt}_i\}_{i\in I},
\{\vep_i\}_{i\in I})$, we associate 
the quadruple $(X_*(T'),\{\til e_i\}_{i\in I},
\{{\rm wt}_i\}_{i\in I},\{\vep_i\}_{i\in I})$
with a free pre-crystal structure (see \cite[Sect.7]{BK}) 
and denote it by ${\mathcal{UD}}_{\theta,T'}(\chi)$.
We have the following theorem:

\begin{thm}[\cite{BK,N}]
For any geometric crystal 
$\chi=(X,\{e_i\}_{i\in I},\{\gamma_i\}_{i\in I},$
$\{\vep_i\}_{i\in I})$ and positive structure
$\theta:T'\rightarrow X$, the associated pre-crystal
${\mathcal{UD}}_{\theta,T'}(\chi)=$\\
$(X_*(T'),\{\til e_i\}_{i\in I},\{{\rm wt}_i\}_{i\in I},
\{\vep_i\}_{i\in I})$ 
is a crystal {\rm (see \cite[Sect.7]{BK})}
\end{thm}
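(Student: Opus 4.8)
The plan is to obtain the theorem as a formal consequence of the functoriality of $\mathcal{UD}$ on $\mathcal{T}_+$, together with the observation that, once transported through the positive structure $\theta$, every defining identity of a geometric crystal --- the $\bbC^\times$-action axioms and Definition~\ref{def-gc} (i)--(iv) --- becomes an identity between \emph{positive} rational morphisms of algebraic tori. First I would fix the finite list of conditions that the quadruple $(X_*(T'),\{\til e_i\}_{i\in I},\{{\rm wt}_i\}_{i\in I},\{\vep_i\}_{i\in I})$ must satisfy in order to be a crystal in the sense of \cite[Sect.~7]{BK}: that each $\til e_i\cl\ZZ\tm X_*(T')\to X_*(T')$ is a $\ZZ$-action, in particular $\til e_i^0=\id$ and $\til e_i^{c_1}\til e_i^{c_2}=\til e_i^{c_1+c_2}$; that ${\rm wt}_j(\til e_i^c(x))={\rm wt}_j(x)+a_{ij}c$; that $\vep_i(\til e_i^c(x))=\vep_i(x)-c$ while $\vep_i(\til e_j^c(x))=\vep_i(x)$ when $a_{ij}=a_{ji}=0$; and that the $\til e_i$ obey the piecewise-linear analogues of the braid-type relations of Definition~\ref{def-gc} (iii). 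Each item on this list is the target of the translation carried out below.

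Next I would translate the geometric-crystal axioms term by term. Transported through $\theta$, the $\bbC^\times$-action property of $e_i$ reads $e_{i,\theta}(c_1,e_{i,\theta}(c_2,t))=e_{i,\theta}(c_1c_2,t)$ as a rational morphism $(\bbC^\times)^2\tm T'\to T'$; it lies in ${\rm Mor}^+$ because each $e_{i,\theta}$ is positive, by part (ii) of the definition of positive structure, and Lemma~\ref{TTT} controls the composite, so applying $\mathcal{UD}$ and using $v(c_1c_2)=v(c_1)+v(c_2)$ from \eqref{ff=f+f} yields $\til e_i^{c_1}\til e_i^{c_2}=\til e_i^{c_1+c_2}$; likewise $\{1\}\tm X\subset dom(e_i)$ with $e_i^1=\id$ gives $\til e_i^0=\id$. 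Definition~\ref{def-gc} (ii), namely $\gamma_j\circ e_i^c=c^{a_{ij}}\gamma_j$, composed with $\theta$ and with an arbitrary character of the target, is an equality of positive rational functions --- here $\gamma_i\circ\theta$ is positive by part (i) of the definition of positive structure --- and applying $v$, with $v(c^{a_{ij}})=a_{ij}$, gives ${\rm wt}_j(\til e_i^c(x))={\rm wt}_j(x)+a_{ij}c$; in the same way Definition~\ref{def-gc} (iv) tropicalizes to $\vep_i(\til e_i^c(x))=\vep_i(x)-c$ and to the orthogonality statement for $\vep_i$. Finally Definition~\ref{def-gc} (iii), equivalently $e_{\bf i}=e_{\bf i'}$ for ${\bf i},{\bf i'}\in R(w)$, is an equality of positive rational morphisms --- the characters $\al^{(j)}(t)$ of $T$ are positive functions of $t$ and are composed with the positive maps $e_{i,\theta}$, so Lemma~\ref{TTT} applies --- whence $\mathcal{UD}$ carries it to the corresponding equality of piecewise-linear maps, which unwinds via $xy\mapsto x+y$, $x/y\mapsto x-y$, $x+y\mapsto\max\{x,y\}$ to the required tropical braid relations.

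The point requiring genuine care --- and what I expect to be the main obstacle --- is twofold. First, one must check that \emph{every} morphism entering the translation genuinely lies in ${\rm Mor}^+$: this uses that $\theta$ is a positive structure, that $\gamma_i\circ\theta$ and $\vep_i\circ\theta$ are positive, and Lemma~\ref{TTT} for every composite, in particular those passing through the auxiliary tori that carry the $\bbC^\times$-parameters. Second, and underlying everything, one relies on the identity $\what{g\circ f}=\what g\circ\what f$ from \cite{BK}, whose proof rests on the fact that $v(f_1+f_2)=\max\{v(f_1),v(f_2)\}$ for positive $f_1,f_2$ --- positivity forbidding any cancellation of leading coefficients at $c=\ify$ --- which is precisely what makes $\mathcal{UD}$ a well-defined functor on $\mathcal{T}_+$ and what forces the ``max-plus'' shape of the tropicalized operations; in the present argument this is invoked, not reproven. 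Granting these, the remainder is finite bookkeeping: one confirms that $\mathcal{UD}$ applied to each $\bbC^\times$-action axiom and to each of Definition~\ref{def-gc} (i)--(iv) lands on the matching axiom from the list fixed in the first step. Since that list is finite and each of its axioms is produced exactly once, it follows that $\mathcal{UD}_{\theta,T'}(\chi)$ is a crystal, which would complete the proof of the theorem.
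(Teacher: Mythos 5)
The paper does not actually prove this statement: it is imported wholesale from \cite{BK,N} (the theorem is stated with those citations and no argument), so there is no internal proof to compare yours against. What you propose is essentially the standard argument of those references, and it is sound: transport each axiom of Definition \ref{def-gc} (and the $\bbC^\times$-action property) through $\theta$ into an identity of positive rational morphisms of tori, then apply the functor $\mathcal{UD}$, using exactly the two inputs you single out --- positivity of every composite (part (i)--(ii) of the definition of positive structure together with Lemma \ref{TTT}) and the functoriality $\what{g\circ f}=\what g\circ\what f$, which rests on $v(f_1f_2)=v(f_1)+v(f_2)$ as in (\ref{ff=f+f}) and on $v(f_1+f_2)=\max\{v(f_1),v(f_2)\}$ for positive $f_1,f_2$, the latter being where positivity is genuinely used. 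The only remaining care is the bookkeeping you describe in your first step: the target notion of ``crystal'' must be taken exactly as in \cite[Sect.7]{BK} (a free pre-crystal whose operators satisfy the tropicalized Verma-type relations coming from Definition \ref{def-gc} (iii), together with the weight and $\vep$ conditions from (ii) and (iv)), and each geometric axiom lands on its matching tropical axiom once, as you indicate. So your proposal is a correct reconstruction of the cited proof rather than a new route.
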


Now, let ${\mathcal{GC}}^+$ be a category whose 
object is a triplet
$(\chi,T',\theta)$ where 
$\chi=(X,\{e_i\},\{\gamma_i\},\{\vep_i\})$ 
is a geometric crystal and $\theta:T'\rightarrow X$ 
is a positive structure on $\chi$, and morphism
$f:(\chi_1,T'_1,\theta_1)\longrightarrow 
(\chi_2,T'_2,\theta_2)$ is given by a rational map
$\vp:X_1\longrightarrow X_2$  
($\chi_i=(X_i,\cd)$) such that 
\begin{eqnarray*}
&&\vp\circ e^{X_1}_{i}=e^{X_2}_{i}\circ\vp,\q \gamma^{X_2}_{i}\circ\vp=\gamma^{X_1}_{i},\q
\vep^{X_2}_{i}\circ\vp=\vep^{X_1}_{i},\\
&&\text{ and }f:=\theta_2^{-1}\circ\vp\circ\theta_1:T'_1\longrightarrow T'_2,
\end{eqnarray*}
is a positive rational morphism. Let ${\mathcal{CR}}$
be the category of crystals. 
Then by the theorem above, we have
\begin{cor}
\label{cor-posi}
The map $ \mathcal{UD} = \mathcal{UD}_{\theta,T'}$ defined above is a functor
\begin{eqnarray*}
 {\mathcal{UD}}&:&{\mathcal{GC}}^+\longrightarrow {\mathcal{CR}},\\
&&(\chi,T',\theta)\mapsto X_*(T'),\\
&&(f:(\chi_1,T'_1,\theta_1)\rightarrow 
(\chi_2,T'_2,\theta_2))\mapsto
(\what f:X_*(T'_1)\rightarrow X_*(T'_2)).
\end{eqnarray*}

\end{cor}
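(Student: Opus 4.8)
The plan is to verify that $\mathcal{UD}$, as already constructed on tori and positive rational morphisms via the functor $\mathcal{UD}\colon\mathcal{T}_+\to\hbox{\germ Set}$, respects the extra structure that turns an object of $\mathcal{GC}^+$ into a crystal and a morphism of $\mathcal{GC}^+$ into a crystal morphism. The object-level assignment $(\chi,T',\theta)\mapsto X_*(T')$, equipped with $\{\til e_i\},\{{\rm wt}_i\},\{\vep_i\}$, is a crystal by the preceding Theorem (the one attributed to \cite{BK,N}), so nothing new is needed there; the content of the corollary is entirely about morphisms. So the first step is: given a morphism $f\colon(\chi_1,T'_1,\theta_1)\to(\chi_2,T'_2,\theta_2)$ in $\mathcal{GC}^+$, which by definition is a rational map $\vp\colon X_1\to X_2$ intertwining the $e_i$, $\gamma_i$, $\vep_i$ and such that $f=\theta_2^{-1}\circ\vp\circ\theta_1\colon T'_1\to T'_2$ is a positive rational morphism, apply the already-established functor $\mathcal{UD}\colon\mathcal{T}_+\to\hbox{\germ Set}$ to $f$ to obtain $\what f\colon X_*(T'_1)\to X_*(T'_2)$. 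This is well-defined precisely because $f\in{\rm Mor}^+(T'_1,T'_2)$.

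Next I would check functoriality of the assignment. Identities are clear: if $\vp=\id_{X}$ then $f=\theta^{-1}\circ\theta=\id_{T'}$ and $\what{\id}=\id$ on $X_*(T')$ by the definition of $\what{(\cdot)}$. For composition, suppose we have morphisms $(\chi_1,T'_1,\theta_1)\xrightarrow{\vp}(\chi_2,T'_2,\theta_2)\xrightarrow{\psi}(\chi_3,T'_3,\theta_3)$, giving positive rational morphisms $f=\theta_2^{-1}\circ\vp\circ\theta_1$ and $g=\theta_3^{-1}\circ\psi\circ\theta_2$. The composite morphism in $\mathcal{GC}^+$ is $\psi\circ\vp$, and the associated torus map is $\theta_3^{-1}\circ\psi\circ\vp\circ\theta_1=(\theta_3^{-1}\circ\psi\circ\theta_2)\circ(\theta_2^{-1}\circ\vp\circ\theta_1)=g\circ f$, which is positive by Lemma \ref{TTT}. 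Then the Lemma just before the construction of $\mathcal{UD}$ (stating $\what{g\circ f}=\what g\circ\what f$) gives $\mathcal{UD}(\psi\circ\vp)=\what{g\circ f}=\what g\circ\what f=\mathcal{UD}(\psi)\circ\mathcal{UD}(\vp)$. So $\mathcal{UD}$ is a functor between the underlying categories of sets.

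The remaining, and really the only substantive, step is to show that $\what f\colon X_*(T'_1)\to X_*(T'_2)$ is actually a morphism of crystals, i.e.\ it is compatible with $\til e_i$, ${\rm wt}_i$ and $\vep_i$ on both sides. This is where one uses that $\vp$ intertwines $e_i^{X_1}$ with $e_i^{X_2}$ and fixes $\gamma_i$ and $\vep_i$. Concretely, from $\vp\circ e^{X_1}_i=e^{X_2}_i\circ\vp$ one gets, after conjugating by the positive structures, $f\circ e_{i,\theta_1}(c,-)=e_{i,\theta_2}(c,-)\circ f$ as positive rational morphisms (here one must be a little careful to phrase it as an identity of rational maps $\bbC^\times\times T'_1\to T'_2$ and to note that the relevant compositions are legitimate in $\mathcal{T}_+$, using Lemma \ref{TTT} again); applying $\mathcal{UD}$ and using its functoriality on the nose then yields $\what f\circ\til e_i=\til e_i\circ\what f$ at the level of $\ZZ\times X_*(T'_1)$. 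Similarly $\gamma_i^{X_2}\circ\vp=\gamma_i^{X_1}$ becomes $(\gamma_i\circ\theta_2)\circ f=\gamma_i\circ\theta_1$, and applying $v(\,-\circ\xi)$ for $\xi\in X_*(T'_1)$ together with the definition of $\what f$ gives ${\rm wt}_i\circ\what f={\rm wt}_i$; likewise for $\vep_i$. The main obstacle, such as it is, is bookkeeping: making sure every composite of rational morphisms that appears is genuinely a composite in $\mathcal{T}_+$ so that $\mathcal{UD}$ may be applied and Lemma \ref{TTT} and the compatibility lemma $\what{g\circ f}=\what g\circ\what f$ are legitimately invoked, and keeping the distinction between $T'$ and $T$ in the statement consistent. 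There is no hard estimate or construction here — it is a formal consequence of the functoriality of $\mathcal{UD}$ on $\mathcal{T}_+$ together with the Theorem that $\mathcal{UD}_{\theta,T'}(\chi)$ is a crystal.
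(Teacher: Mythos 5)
Your proposal is correct and follows the same route the paper intends: the paper states the corollary as an immediate consequence of the preceding theorem (that $\mathcal{UD}_{\theta,T'}(\chi)$ is a crystal) together with Lemma \ref{TTT} and the compatibility $\what{g\circ f}=\what g\circ\what f$, which is exactly the formal verification you spell out. Your write-up simply makes explicit the morphism-level checks that the paper leaves to the reader.
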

We call the functor $\mathcal{UD}$
{\it ``ultra-discretization''} as in (\cite{N,N2})
instead of ``tropicalization'' as in \cite{BK}.
And 
for a crystal $B$, if there
exists a geometric crystal $\chi$ and a positive 
structure $\theta:T'\rightarrow X$ on $\chi$ such that 
${\mathcal{UD}}(\chi,T',\theta)\cong B$ as crystals, 
we call an object $(\chi,T',\theta)$ in ${\mathcal{GC}}^+$
a {\it tropicalization} of $B$, which is not standard but
we use such a terminology as before.

\section{Perfect Crystals of type $\TY(A,1,n)$}
\label{perf}

From now on we assume $\ge$ to be the affine Lie algebra $\TY(A,1,n), n \geq 2$. In this section, we recall the coherent family of  perfect crystals of type 
$\TY(A,1,n), n \geq 2$ and its limit given in (\cite{OSS}, \cite{KMN2}). For basic notions of crystals, coherent family of perfect crystals and its limit we refer the reader to \cite{KKM} 
(See also \cite{KMN1,KMN2}).

For the affine Lie algebra $A_n^{(1)}$, let 
$\{\alpha_0, \alpha_1, \cdots \alpha_n\}$, 
$\{\al^\vee_0, \al^\vee_1, \cdots  \al^\vee_n\}$ and 
$\{\Lm_0, \Lm_1,\\
 \cdots \Lm_n\}$ be the set of 
simple roots, simple coroots and fundamental weights, respectively.
The Cartan matrix $A=(a_{ij})_{i,j \in I}$ , $I = \{0, 1, \cdots , n\}$
is given by:
\begin{eqnarray*}
a_{ij}= \begin{cases}
2 \qquad &{\rm if} \quad i=j ,\\
-1 \qquad &{\rm if} \quad  i \equiv (j \pm 1) \ \ {\rm mod} (n+1),\\
0 \qquad &{\rm otherwise}
\end{cases}
\end{eqnarray*}
and its Dynkin diagram is as follows. 
\begin{figure}[h!]
\begin{center}

\setlength{\unitlength}{.5cm}
\begin{picture}(5,2)
\linethickness{0.05mm}
\put(.175,0){\line(1,0){.65}}
\put(3.175,0){\line(1,0){.65}}
\put(.15,.15){\line(1,1){1.7}}
\put(2.15,1.85){\line(1,-1){1.7}}
\multiput(1.175,0)(.3,0){6}{\line(1,0){.15}}
\put(0,0){\circle{.35}}
\put(0,-.8){1}
\put(1,0){\circle{.35}}
\put(1,-.8){2}
\put(3,0){\circle{.35}}
\put(2.5,-.8){n-1}
\put(4,0){\circle{.35}}
\put(4,-.8){n}
\put(2,2){\circle{.35}}
\put(2,2.5){0}
\end{picture}

\end{center}
\label{Dynkin}
\end{figure}

The standard null root $\delta$ 
and the canonical central element $c$ are 
given by
\[
\delta=\alpha_0+\alpha_1+ \cdots +\alpha_n
\quad\text{and}\quad c=\al^\vee_0+\al^\vee_1+ \cdots +\al^\vee_n, 
\]
where 
$\al_0=2\Lm_0-\Lm_1-\Lm_n+\del,\q
\al_i=-\Lm_{i-1}+2\Lm_i-\Lm_{i+1}, 1 \le i \le n-1 \q
\al_n=-\Lm_0 - \Lm_{n-1}+2\Lm_n.$

For a positive integer $l$ we introduce 
$\TY(A,1,n)$-crystals $B^{2,l}$ and $B^{2,\ify}$ as 
\begin{eqnarray*}
&&B^{2,l}=\left\{
b=(b_{ji})_{1\le j \le 2 ,  j \le i \le j+n-1}
\left\vert
\begin{array}{c}
b_{ji} \in \ZZ_{\geq  0} , \sum_{i=j}^{j+n-1} b_{ji} = l , 1 \le j \le 2\\
\sum_{i=1}^t b_{1i} \geq \sum_{i=2}^{t+1} b_{2i} , 1 \le t \le n
\end{array}
\right.
\right\},\\
&&B^{2,\ify}=\left\{
b=(b_{ji})_{1\le j \le 2 ,  j \le i \le j+n-1}
\left\vert
b_{ji} \in \ZZ , \sum_{i=j}^{j+n-1} b_{ji} = 0 , 1 \le j \le 2
\right.
\right\}. 
\end{eqnarray*}
Now we describe the explicit crystal structures
of $B^{2,l}$ and $B^{2,\ify}$. 
Indeed, most of them coincide with 
each other except
for $\vep_0$ and $\vp_0$.
In the rest of this section, we use the following 
convention: 
$(x)_+=\max(x,0)$.
For $b=(b_{ji})$ we denote
\begin{equation} \label{z_i}
z_i= b_{1i} - b_{2,i+1}, \quad  2 \le i \le n-1.
\end{equation}

Now we define conditions ($E_m$) and ($F_m$) for $ 2 \le m \le n$  as follows.

\begin{equation} 
\label{(F)}
(F_m) : \q  \begin{cases} z_k + z_{k+1}+ \cdots + z_{m-1} \le 0 , &
2 \leq k \leq m-1\\
 z_m + z_{m+1} + \cdots + z_k > 0, &  m \le k \le n-1.
 \end{cases}
 \end{equation}
 \begin{equation}
\label{(E)} 
(E_m) : \q \begin{cases} z_k + z_{k+1}+ \cdots + z_{m-1} < 0 , &
2 \leq k \leq m-1\\
z_m + z_{m+1} + \cdots + z_k \geq 0, & m \le k \le n-1.
\end{cases}
\end{equation}
We also define 
\begin{equation} \label{Delta}
\Delta (m) = (b_{12}+b_{13}+ \cdots +b_{1,m-1})+(b_{2,m+1}+b_{2,m+2}+ \cdots +b_{2n}), \quad 2 \le m \le n.
\end{equation}

Let $\Delta = {\rm min} \{ \Delta(m) \mid 2 \le m \le n\}$. Note that for $2 \le m \le n$, $\Delta = \Delta (m)$ if the condition $(F_m)$ (or $(E_m)$) hold. Then for $b=(b_{ji}) \in B^{2,l}$ or $B^{2,\ify}$,
$\et{k}(b), \ft{k}(b), \vep_k(b), \vp_k(b), k= 0, 1, \cdots , n$ are given as follows. 

For $0 \le k \le n$, $\et{k}(b) = (b'_{ji})$, where

\begin{align*}
\begin{cases}
k= 0: &  b'_{11} = b_{11}-1,  b'_{1m} = b_{1m}+1, b'_{2m}= b_{2m}-1, b'_{2,n+1}=b_{2,n+1}+1\\ 
 & \text{ if } \ \ (E_m), 2 \le m \le n, \\
k= 1: & b'_{11} = b_{11}+1,  b'_{12} = b_{12}-1, \\ 
2 \le k \le n-1: &  \begin{cases}
b'_{1k} = b_{1k}+1, b'_{1,k+1} = b_{1,k+1}-1 \ \ \text{ if } \ \ b_{1k} \geq b_{2,k+1},\\
b'_{2k} = b_{2k}+1, b'_{2,k+1} = b_{2,k+1}-1 \ \ \text{ if } \ \  b_{1k} < b_{2,k+1},
\end{cases}\\
k= n: & b'_{2n}= b_{2n} + 1, \, \, b'_{2,n+1} = b_{2,n+1}-1
\end{cases}
\end{align*}

and $b'_{ji} = b_{ji}$ otherwise. 

For $0 \le k \le n$, $\ft{k}(b) = (b'_{ji})$, where

\begin{align*}
\begin{cases}
k= 0: &  b'_{11} = b_{11}+1,  b'_{1m} = b_{1m}-1, b'_{2m}= b_{2m}+1, b'_{2,n+1}=b_{2,n+1}-1\\ 
 & \text{ if } \ \ (F_m), 2 \le m \le n, \\
k= 1: & b'_{11} = b_{11}-1,  b'_{12} = b_{12}+1, \\ 
2 \le k \le n-1: &  \begin{cases}
b'_{1k} = b_{1k}-1, b'_{1,k+1} = b_{1,k+1}+1 \ \ \text{ if } \ \ b_{1k} > b_{2,k+1},\\
b'_{2k} = b_{2k}-1, b'_{2,k+1} = b_{2,k+1}+1 \ \ \text{ if } \ \  b_{1k} \le b_{2,k+1},
\end{cases}\\
k= n: & b'_{2n}= b_{2n} - 1, \, \, b'_{2,n+1} = b_{2,n+1} +1
\end{cases}
\end{align*}

and $b'_{ji} = b_{ji}$ otherwise. 
For $b\in B^{2,l}$ 
if $\et{k}b$ or $\ft{k}b$ does not belong to 
$B^{2,l}$ then
we understand it to be $0$.

\begin{align*}
\vep_1(b)=&b_{12},
\qq\vp_1(b)=
b_{11}-b_{22},
\\
\vep_k(b)=& b_{1,k+1}+(b_{2,k+1} - b_{1,k})_+  
\qq\vp_k(b)=b_{2k} + (b_{1k}-b_{2,k+1})_+,\\
&{\text {for}} \q 2 \le k \le n-1,\\
\vep_n(b) =& b_{2,n+1} - b_{1n}, \qq \vp_n(b) = b_{2n}\\
\vep_0(b)=&
\begin{cases}
l-b_{2,n+1} - \Delta, & b\in B^{2,l},\\
-b_{2,n+1} - \Delta, & b\in B^{2,\ify},
\end{cases}\\
\vp_0(b)=&
\begin{cases}
l-b_{11} - \Delta, & b\in B^{2,l},\\
-b_{11} - \Delta, & b\in B^{2,\ify}.
\end{cases}
\end{align*}

Hence the weights $wt_i(b) = \vp_i(b) - \vep_i(b), 0\le i \le n$ are:
\begin{align*}
\begin{cases}
wt_0(b) = b_{2, n+1} - b_{11},\\
wt_1(b) = b_{11}-b_{12}-b_{22},\\
wt_k(b) = (b_{1k} - b_{1,k+1})+(b_{2k} - b_{2,k+1})& (1< k<n),\\
wt_n(b) = b_{1n}+b_{2n}-b_{2,n+1}.
\end{cases}
\end{align*}
The following results have been proved in (\cite{KMN2}, \cite{OSS}):
\begin{thm}[\cite{KMN2,OSS}] 
\begin{enumerate}
\item The $\TY(A,1,n)$-crystal $B^{2,l}$ 
is a perfect crystal of level $l$. 
\item
The family of the perfect crystals 
$\{B^{2,l}\}_{\l\geq1}$ forms a 
coherent family and the crystal $B^{2,\ify}$ 
is its limit with the vector 
$b_\ify=(0)_{2\times n}$.
\end{enumerate}
\end{thm}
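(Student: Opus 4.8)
Both parts go back to \cite{KMN2,OSS}; here is the route I would follow. For (i), recall from \cite{KMN1} that $B^{2,l}$ being a perfect crystal of level $l$ amounts to checking: (a) it is the crystal base of a finite-dimensional $U'_q(\TY(A,1,n))$-module; (b) $B^{2,l}\otimes B^{2,l}$ is connected; (c) there is $\lm_0$ with $\wt(B^{2,l})\subseteq\lm_0+\sum_{i\ne0}\ZZ_{\le0}\al_i$ and $\#(B^{2,l})_{\lm_0}=1$; (d) $\langle c,\vep(b)\rangle\ge l$ for all $b$, where $\vep(b):=\sum_i\vep_i(b)\Lm_i$; and (e) $\vep$ and $\vp$ restrict to bijections from $(B^{2,l})_{\mathrm{min}}:=\{b\mid\langle c,\vep(b)\rangle=l\}$ onto the set of level-$l$ dominant weights modulo $\ZZ\del$. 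The plan is to verify that the displayed piecewise-linear data define a $U'_q$-crystal and then to establish (a)--(e).

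To see that the formulas for $\tilde e_i,\tilde f_i,\vep_i,\vp_i,\wt_i$ define a crystal, one checks $\vp_i(b)-\vep_i(b)=\langle\al^\vee_i,\wt(b)\rangle$, $\vep_i(\tilde e_ib)=\vep_i(b)-1$, $\tilde f_i\tilde e_ib=b$ on the appropriate locus, and compatibility with the tensor-product rule; after the case split by $(E_m),(F_m)$ and by the signs of $z_k=b_{1k}-b_{2,k+1}$, these become finite piecewise-linear identities. To identify the resulting crystal with the Kirillov--Reshetikhin crystal $B^{2,l}$ of type $\TY(A,1,n)$, I would match the coordinates $b=(b_{ji})$ with the standard model: forgetting node $0$, it is the irreducible type $A_n$ crystal $B(l\varpi_2)$ realized by semistandard tableaux of rectangular shape $(l,l)$ on $\{1,\dots,n+1\}$, with $b_{1i}$ (resp.\ $b_{2i}$) the number of $i$'s in the first (resp.\ second) row; and $\tilde e_0,\tilde f_0$ are the conjugates of $\tilde e_1,\tilde f_1$ by the promotion (diagram rotation $i\mapsto i+1$) automorphism, whose expression in these coordinates is exactly the $k=0$ rule above. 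This gives (a), and (since $B(l\varpi_2)$ is a connected crystal with a unique highest-weight vector) also (c) with $\lm_0=l\varpi_2$. For (b) one shows by a direct argument that any $b\otimes b'$ is connected to a fixed element of $B^{2,l}\otimes B^{2,l}$ by the root operators $\tilde e_i,\tilde f_i$ ($i\in I$).

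For (d)--(e) I would compute directly. Since $c=\sum_i\al^\vee_i$ here, $\langle c,\Lm_i\rangle=1$ for all $i$, so $\langle c,\vep(b)\rangle=\sum_{i=0}^n\vep_i(b)$, and a telescoping simplification (using $\Delta\le\Delta(n)=l-b_{11}-b_{1n}$ and $\Delta(n)-\Delta(m)=z_m+z_{m+1}+\cdots+z_{n-1}$) gives
\[
\langle c,\vep(b)\rangle=l+\big(\Delta(n)-\Delta\big)+\sum_{k=2}^{n-1}(-z_k)_+\ge l,
\]
with equality exactly when $z_2=\cdots=z_{n-1}=0$; this describes $(B^{2,l})_{\mathrm{min}}$ and proves (d). Both $(B^{2,l})_{\mathrm{min}}$ (via $\vep$, resp.\ $\vp$) and the set of level-$l$ dominant weights $\sum_{i=0}^nm_i\Lm_i$ ($m_i\ge0$, $\sum_im_i=l$) have $\binom{n+l}{n}$ elements, and writing out the inverse map explicitly (reconstructing the $b_{ji}$ from the $m_i$), and checking that it lands in $(B^{2,l})_{\mathrm{min}}$ and inverts $\vep$ (resp.\ $\vp$), proves (e). This establishes (i).

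For (ii), by the definition of a coherent family and its limit \cite{KKM} it suffices to exhibit $b_\ify\in B^{2,\ify}$ with $\wt(b_\ify)=0$ and $\vep_i(b_\ify)=\vp_i(b_\ify)=0$ for all $i$, and to check that the structure maps of $B^{2,\ify}$ are the $l\to\infty$ stabilizations of those of the $B^{2,l}$ together with the compatible embeddings $(B^{2,l})_{\mathrm{min}}\hookrightarrow B^{2,\ify}$ given in coordinates by subtracting the distinguished minimal vector (so the normalization $\sum_ib_{ji}=l$ becomes $\sum_ib_{ji}=0$). Take $b_\ify=(0)_{2\times n}$: then every $\Delta(m)=0$, hence $\Delta=0$, and all the formulas give $\vep_i(b_\ify)=\vp_i(b_\ify)=\wt_i(b_\ify)=0$ (for $i=0$, the expressions $-b_{2,n+1}-\Delta$ and $-b_{11}-\Delta$ both vanish). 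For $i\ne0$ the operators $\tilde e_i,\tilde f_i$ and the functions $\vep_i,\vp_i,\wt_i$ on $B^{2,\ify}$ are given by literally the same formulas as on $B^{2,l}$, while for $i=0$ they differ only by the additive constant $l$ in $\vep_0,\vp_0$ (precisely the normalization built into the notion of the limit), and the perfectness data from (i) supply the embeddings; matching this with the axioms of \cite{KKM} gives (ii). The genuinely non-trivial steps, and where I expect the main effort, are the following: verifying that the piecewise-linear formulas really define a crystal and correctly encode $\tilde e_0,\tilde f_0$ (controlling the interaction of promotion with the $(E_m),(F_m)$ case division); and establishing the connectedness of $B^{2,l}\otimes B^{2,l}$ together with the bijectivity of $\vep,\vp$ on $(B^{2,l})_{\mathrm{min}}$. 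Everything else is routine bookkeeping, and both of these points are carried out in detail in \cite{KMN2,OSS}.
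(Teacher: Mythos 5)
This theorem is not proved in the paper at all: it is quoted from \cite{KMN2,OSS}, so there is no internal argument to measure your proposal against, only the cited references. Your outline is the standard route taken there: verify the perfectness axioms of \cite{KMN1} for the tableau model of the KR crystal $B^{2,l}$ (with $\tilde{e}_0,\tilde{f}_0$ obtained from $\tilde{e}_1,\tilde{f}_1$ by conjugating with the promotion/rotation automorphism, which is what the $(E_m)$, $(F_m)$ case division encodes), and then check the coherent-family axioms of \cite{KKM} with $b_\ify=(0)_{2\times n}$. The one computation you carry out explicitly is correct: since $\Delta(n)=b_{12}+\cdots+b_{1,n-1}=l-b_{11}-b_{1n}$ and $\Delta(n)-\Delta(m)=z_m+\cdots+z_{n-1}$, summing the formulas of Section 3 indeed gives $\sum_{i=0}^n\vep_i(b)=l+(\Delta(n)-\Delta)+\sum_{k=2}^{n-1}(-z_k)_+\ge l$, with equality exactly when $z_2=\cdots=z_{n-1}=0$, and the resulting minimal set has the same cardinality $\binom{n+l}{n}$ as the level-$l$ dominant weights. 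The genuinely nontrivial steps (that the piecewise-linear rules define the KR crystal and correctly realize the $0$-arrows, connectedness of $B^{2,l}\otimes B^{2,l}$, bijectivity of $\vep,\vp$ on minimal vectors) you correctly identify and defer to \cite{KMN2,OSS}, which is exactly the status they have in the paper; as a blind reconstruction of the cited proof, your proposal is sound.
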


\renewcommand{\thesection}{\arabic{section}}
\section{Affine Geometric Crystal $\cV(\TY(A,1,n))$}
\setcounter{equation}{0}
\renewcommand{\theequation}{\thesection.\arabic{equation}}


Let $c=\sum_{i=0}^n \al^\vee_i$ be the canonical
central element in the affine Lie algebra $\ge = \TY(A,1,n)$ and
$\{\Lm_i|i\in I\}$ be the set of fundamental 
weights as in the previous section. Let $\sigma$ denote the Dynkin diagram 
automorphism. In particular, $\sigma (\Lm_i) = \Lm_{\overline{i+1}}$, where $\overline{i+1} = (i+1) \ {\rm mod} (n+1)$.
Consider the level $0$ fundamental weight $\varpi_2:=\Lm_2-\Lm_0$. Let $I_0 = I \setminus {0}, \ \ I_n = I \setminus {n}$, 
and $\ge _ i$ denote the subalgebra of $\ge$ associated with the index sets $I_i, i= 0, n$. Then $\ge_0$ as well as $\ge_n$ is 
isomorphic to $A_n$.

Let $W(\varpi_2)$ be the fundamental representation 
of $\uqp$ associated with $\varpi_2$ (\cite{K0}).
By \cite[Theorem 5.17]{K0}, $W(\varpi_2)$ is a
finite-dimensional irreducible integrable 
$\uqp$-module and has a global basis
with a simple crystal. Thus, we can consider 
the specialization $q=1$ and obtain the 
finite-dimensional $\TY(A,1,n)$-module $W(\varpi_2)$, 
which we call a fundamental representation
of $\TY(A,1,n)$ and use the same notation as above.
We shall present the explicit form of 
$W(\varpi_2)$ below.

\subsection{Fundamental representation 
$W(\varpi_2)$ for $\TY(A,1,n)$}
\label{fundamental}
The $\TY(A,1,n)$-module $W(\varpi_2)$ is an $\frac{1}{2}n(n+1)$-dimensional 
module with the basis,

\[
\{(i, j) \mid 1 \le i <j \le n+1\}, 
\]

where $(i, j)$ denotes the tableaux:
 
\begin{figure}[h!]
\begin{center}
\setlength{\unitlength}{.4cm}
\begin{picture}(1,2)
\put(0,1){\makebox(1,1)[c]{$i$}}
\put(0,0){\makebox(1,1)[c]{$j$}}
\linethickness{0.05mm}
\put(0,0){\line(1,0){1}}
\put(0,1){\line(1,0){1}}
\put(0,2){\line(1,0){1}}
\put(0,0){\line(0,1){2}}
\put(1,0){\line(0,1){2}}
\end{picture}
\end{center}
\label{ij boxes}
\end{figure}
The actions of $e_i$ and $f_i$ on these basis vectors
are given as follows.

For $1 \le k \le n$, we have
\begin{eqnarray*}
f_k (i,j) &=& \begin{cases}
(i+1, j), & i= k < j-1 \\
(i, j+1), & j= k \\
0, &\text{otherwise}.
\end{cases} \\
e_k (i,j) &=& \begin{cases}
(i-1, j), & i= k+1 \\
(i, j-1), & i < j-1= k \\
0, & \text{otherwise}.
\end{cases}
\end{eqnarray*}
\begin{eqnarray*}
f_0 (i,j) &=& \begin{cases}
(1, i), & i\not= 1,  j= n+1 \\
0, & \text{otherwise}.
\end{cases} \\
e_0 (1,j) &=&\begin{cases}
(j, n+1), & i\not= 1 \\
0, & \text{otherwise}.
\end{cases}
\end{eqnarray*}

Furthermore the weights of the basis vectors are given by:
\begin{eqnarray*}
wt(i, j) = (\Lm_i - \Lm_{i-1} + \Lm_j - \Lm_{j-1}) \qquad 1 \le i < j \le n+1,
\end{eqnarray*}
where we understand that $\Lm_{n+1} = \Lm_0$. Note that in $W(\varpi_2)$, we have
$(1 , 2)$ (resp. $(1 , n+1)$) is a $\ge_0$ (resp. $\ge_n$) highest weight vector
with weight $\varpi_2 = \Lm_2 - \Lm_0$ (resp. $\sigma^{-1}\varpi_2 = \Lm_1 - \Lm_n$).

\subsection{Affine Geometric Crystal $\cV(\TY(A,1,n))$ in $W(\varpi_2)$ }

Now we will construct the affine geometric crystal $\cV(\TY(A,1,n))$ in $W(\varpi_2)$ 
explicitly. For $\xi\in (\frt^*_{\rm cl})_0$, let $t(\xi)$ be the 
translation as in \cite[Sect 4]{K0} and $\wtil\varpi_i$ 
as in \cite{K1}. Indeed, 
$\wtil\varpi_i:=\max(1,\frac{2}{(\al_i,\al_i)})\varpi_i = \varpi_i$ in our case.
Then we have 
\begin{eqnarray*}
&& t(\wtil\varpi_2)=\sigma^2(s_{n-1}s_{n-2} \cdots s_1)(s_ns_{n-1} \cdots s_2)=:\sigma^2w_1,\\
&& t(\text{wt}(1 , n+1))=\sigma^2(s_{n-2}s_{n-3} \cdots s_0)(s_{n-1}s_{n-2} \cdots s_1)=:\sigma^2w_2,
\end{eqnarray*}
Associated with these Weyl group elements $w_1, w_2 \in W$,
we define algebraic varieties $\cV_1, \ \cV_2\subset W(\varpi_2)$ as follows.
\begin{eqnarray*}
&&\hspace{-30pt}\cV_1:=\{V_1(x)
:=Y_{n-1}(x_{2n-1}) \cdots Y_1(x_{n+1})Y_n(x_n) \cdots Y_2(x_2)
(1, 2)\,\,\vert\,\,x_i\in\bbC^\times \},\\
&&\hspace{-30pt}\cV_2:=\{V_2(y):=
Y_{n-2}(y_{2n-2}) \cdots Y_0(y_n)Y_{n-1}(y_{n-1}) \cdots Y_1(y_1)
(1, n+1)\,\,\vert\,\,y_i\in\bbC^\times \}.
\end{eqnarray*}
Using  the explicit actions of $f_i$'s on $W(\varpi_2)$
as above, we have $f_i^2=0$, for all $i \in I$. Therefore, 
we have 
\[
Y_i(c)=(1+\frac{f_i}{c})\al_i^\vee(c)
\,\,{\rm for \  all} \ \  i \in I.
\]
Thus we can get explicit forms of $V_1(x)\in\cV_1$ 
and $V_2(y)\in\cV_2$. Set
\begin{eqnarray*}
V_1(x) = &V_1(x_2, x_3, \cdots x_{2n-1}) = \sum_{1\leq i < j \leq n+1} X_{ij} (i, j), \\
V_2(y)=& V_2(y_1, y_2, \cdots y_{2n-2}) = \sum_{1\leq i < j \leq n+1} Y_{ij} (i, j).
\end{eqnarray*}
where the coefficients  $X_{ij}$'s and $Y_{ij}$'s can be computed explicitly.
These coefficients are positive rational functions in the variables $(x_2, \cd,x_{2n-1})$ and 
$(y_1, \cd,y_{2n-2})$ respectively and they are given as follows:
\begin{eqnarray*}
X_{ij} = \begin{cases}
x_{i+1} + \frac{x_{i+2}x_{n+i}}{x_{n+i+1}} +
	  \frac{x_{i+3}x_{n+i}}{x_{n+i+2}} + \cd +
	  \frac{x_{n}x_{n+i}}{x_{2n-1}}, & i\not=n, j=n \\ \displaystyle
x_{n+j}\left(x_{i+1} + \frac{x_{i+2}x_{n+i}}{x_{n+i+1}} + \frac{x_{i+3}x_{n+i}}{x_{n+i+2}} + \cd + \frac{x_{j}x_{n+i}}{x_{n+j-1}}\right), &
i\not=n, i+1 \leq j \leq n-1\\ \displaystyle
x_{n+i} , &  i \not= n, j= n+1 \\ \displaystyle
1 , & i=n, \, j= n+1.
\end{cases}
\end{eqnarray*}

\begin{eqnarray*}
Y_{ij} = \begin{cases}\displaystyle
y_{n+j}\left(y_{i+1} + \frac{y_{i+2}y_{n+i}}{y_{n+i+1}} + \frac{y_{i+3}y_{n+i}}{y_{n+i+2}} + \cd + \frac{y_jy_{n+i}}{y_{n+j-1}}\right), 
& 1 \leq i < j \leq n-2\\ \displaystyle
y_{i+1} + \frac{y_{i+2}y_{n+i}}{y_{n+i+1}} +
	  \frac{y_{i+3}y_{n+i}}{y_{n+i+2}} + \cd +
	  \frac{y_{n-1}y_{n+i}}{y_{2n-2}}, 
& 1 \leq i\leq n-2, j=n-1 \\ \displaystyle
y_{n+i} , & 1 \leq i \leq n-2 , j= n \\ \displaystyle
1 , &i=n-1, \, j= n \\
y_{n+i}\left(y_{1} + \frac{y_{2}y_{n}}{y_{n+1}} + \frac{y_{3}y_{n}}{y_{n+2}} + \cd + \frac{y_{i}y_{n}}{y_{n+i-1}}\right), 
& 1 \leq i  \leq n-2, j=n+1\\ \displaystyle
y_{1} + \frac{y_{2}y_{n}}{y_{n+1}} + \frac{y_{3}y_{n}}{y_{n+2}} + \cd + \frac{y_{n-1}y_{n}}{y_{2n-2}}, 
&i=  n-1, j=n+1\\ \displaystyle
y_n , & i=n, \, j= n+1.
\end{cases}
\end{eqnarray*}

Now for a given $x=(x_2, x_3, ,\cd, x_{2n-1})$ we solve the equation 
\begin{equation}
 V_2(y)=a(x)V_1(x),
\label{eq}
\end{equation}
where $a(x)$ is a rational function in $x=(x_2, x_3, ,\cd, x_{2n-1})$.
Though this equation is over-determined, 
it can be solved uniquely by direct calculation and the explicit form of 
solution is given below.

\begin{lem}
\label{x->y}
We have the rational function $a(x)$ and 
the unique solution of (\ref{eq}):
\begin{eqnarray*}\displaystyle
&& \displaystyle a(x) = \frac{1}{x_n},\, \,
y_1= \left(\frac{x_{2}}{x_{n+1}} + \frac{x_3}{x_{n+2}} + \cd + \frac{x_n}{x_{2n-1}}\right)^{-1}, \\
&& \displaystyle y_k= x_k\left(\frac{x_{k+1}}{x_{n+k}} + \frac{x_{k+2}}{x_{n+k+1}} + \cd + \frac{x_n}{x_{2n-1}}\right)^{-1}, \, 2 \leq k \leq n-1,\\
&& \displaystyle 
y_n= \frac{1}{x_n}, \, \, y_{n+l}
= \frac{x_{n+l}}{x_n}\left(\frac{x_{l+1}}{x_{n+l}} 
+ \frac{x_{l+2}}{x_{n+l+1}} + \cd + 
\frac{x_n}{x_{2n-1}}\right), \, 1 \leq l \leq n-2.
\end{eqnarray*}
\end{lem}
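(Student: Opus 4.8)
The claim to establish is Lemma \ref{x->y}: there is a rational function $a(x)$ and a unique solution $y=(y_1,\dots,y_{2n-2})$ of the equation $V_2(y)=a(x)V_1(x)$, given by the stated explicit formulas. The plan is to compare, coefficient by coefficient, the two sides of (\ref{eq}) in the basis $\{(i,j)\mid 1\le i<j\le n+1\}$ of $W(\varpi_2)$, using the explicit formulas for $X_{ij}$ and $Y_{ij}$ already recorded above. Because both $V_1(x)$ and $V_2(y)$ are obtained by applying a product of the operators $Y_k(c)=(1+\tfrac{f_k}{c})\al^\vee_k(c)$ to an extremal vector, each of them has a nonzero coefficient on a distinguished ``lowest'' tableau; the natural first step is to read off $a(x)$ from one such coefficient. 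Concretely, the coefficient $X_{n,n+1}=1$ while $Y_{n,n+1}=y_n$, so matching the $(n,n+1)$-entries forces $a(x)y_n=1$; combined with matching the $(n-1,n+1)$-entries (where $Y_{n-1,n+1}=y_1+\tfrac{y_2y_n}{y_{n+1}}+\cdots+\tfrac{y_{n-1}y_n}{y_{2n-2}}$ and $X_{n-1,n+1}=x_{n-1}$ — more precisely $X_{i,n+1}=x_{n+i}$) one begins to pin down $a(x)$ and then $y_n$.

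First I would organize the $\tfrac12 n(n+1)$ scalar equations $Y_{ij}=a(x)X_{ij}$ into three groups according to the shape of the formulas: (a) the entries with $j=n+1$, which involve $y_{n+1},\dots,y_{2n-2}$ and $y_1,\dots,y_{n-1}$ through the ``telescoping'' sums $y_1+\tfrac{y_2y_n}{y_{n+1}}+\cdots$; (b) the entries with $j=n$, which are the pure sums $y_{i+1}+\tfrac{y_{i+2}y_{n+i}}{y_{n+i+1}}+\cdots+\tfrac{y_{n-1}y_{n+i}}{y_{2n-2}}$; and (c) the remaining entries $1\le i<j\le n-1$, which are the group-(b) expressions multiplied by $y_{n+j}$. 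The key structural observation is that ratios of consecutive coefficients telescope: for instance $X_{ij}/X_{i,j-1}$ and $Y_{ij}/Y_{i,j-1}$ are simple monomials in the $x$'s (resp. $y$'s), so dividing adjacent equations eliminates the long sums and yields monomial relations among the $y$'s, the $x$'s, and $a(x)$. Running this elimination systematically — start from $a(x)=1/x_n$ and $y_n=1/x_n$ forced by group (a), then solve for $y_{n+l}$ from group (a) with $j=n+1$, then for $y_1$ and $y_k$ ($2\le k\le n-1$) from groups (a)/(b) — produces exactly the formulas in the statement. Uniqueness is automatic from this process: at each stage the relevant variable is determined by a single equation with nonzero (invertible, since everything lies in $\bbC^\times$) coefficient, so no choices are made.

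The remaining task is consistency: the system (\ref{eq}) is over-determined (it has $\tfrac12 n(n+1)$ equations for the $2n-2$ unknowns $y_i$ plus $a(x)$), so I must verify that the candidate solution obtained from the ``triangular'' subset of equations actually satisfies \emph{all} of them. I expect this verification to be the main obstacle, and it is essentially a bookkeeping computation: substitute the formulas for $y_1,\dots,y_n,y_{n+1},\dots,y_{2n-2}$ and $a(x)=1/x_n$ into each $Y_{ij}$, clear denominators, and check the identity $Y_{ij}=\tfrac1{x_n}X_{ij}$ using that the sum $\tfrac{x_{k+1}}{x_{n+k}}+\cdots+\tfrac{x_n}{x_{2n-1}}$ appears as a common factor. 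Here the telescoping identity
\[
y_{k}\cdot\frac{y_{n}}{y_{n+k-1}}
= x_{k}\left(\frac{x_{k+1}}{x_{n+k}}+\cdots+\frac{x_n}{x_{2n-1}}\right)^{-1}
\cdot\frac{1/x_n}{\frac{x_{n+k-1}}{x_n}\left(\frac{x_{k}}{x_{n+k-1}}+\cdots+\frac{x_n}{x_{2n-1}}\right)}
\]
and its analogues make the long sums on the $y$-side collapse into the long sums on the $x$-side, so the verification reduces to a finite number of elementary algebraic identities indexed by the pairs $(i,j)$. Since this is routine though lengthy, I would carry out the group-(a) and group-(b) cases in detail and indicate that group (c) follows by multiplying the group-(b) identities by $y_{n+j}$ and using the already-verified formula for $y_{n+j}$; this establishes both existence and uniqueness of the solution, proving the lemma.
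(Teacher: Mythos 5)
Your overall strategy---expand both sides of $V_2(y)=a(x)V_1(x)$ in the basis $\{(i,j)\}$, solve a triangular subset of the coefficient equations $Y_{ij}=a(x)X_{ij}$ for $a(x)$ and the $y$'s, then verify the remaining equations of the over-determined system---is exactly the ``direct calculation'' the paper invokes (the paper gives no further detail), so in method you agree with it. However, several of your concrete anchoring steps are wrong as written. Matching the $(n,n+1)$-coefficients gives $y_n=a(x)\cdot 1$, i.e. $y_n=a(x)$, not $a(x)y_n=1$; the latter is inconsistent with the solution you are aiming for, since $a(x)=y_n=1/x_n$ would then force $x_n^2=1$. Moreover $a(x)=1/x_n$ is not ``forced by group (a)'' (the entries with $j=n+1$): it comes from the $(n-1,n)$-entry, where $Y_{n-1,n}=1$ and $X_{n-1,n}=x_n$, so $a(x)x_n=1$. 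Your grouping also mislocates the formulas: on the $Y$-side the ``pure sums'' occur at $j=n-1$, while $Y_{i,n}=y_{n+i}$ for $i\le n-2$; it is the equations at $(i,n)$ that yield $y_{n+l}=\frac{x_{n+l}}{x_n}\left(\frac{x_{l+1}}{x_{n+l}}+\cdots+\frac{x_n}{x_{2n-1}}\right)$, and the equations at $(i,n+1)$, read for successive $i$ and subtracted, that yield $y_1$ and the $y_k$.

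Second, the elimination device you describe does not work as stated: ratios of adjacent coefficients such as $X_{ij}/X_{i,j-1}$ are not monomials, because the bracketed sums have different numbers of terms; what telescopes are differences, e.g. $X_{ij}/x_{n+j}-X_{i,j-1}/x_{n+j-1}=x_jx_{n+i}/x_{n+j-1}$, and the analogous differences of the partial sums appearing in the $(i,n+1)$-equations (after substituting the already-determined $y_{n+i}$ and $y_n$), which is how $y_i=x_i\bigl(\frac{x_{i+1}}{x_{n+i}}+\cdots+\frac{x_n}{x_{2n-1}}\bigr)^{-1}$ is actually isolated. With these repairs the triangular solve does produce the stated formulas, and uniqueness follows as you say. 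Finally, the substantive point of the lemma---that the remaining coefficients $(i,j)$ with $j\le n-1$ are then automatically matched---is left in your write-up as ``routine though lengthy''; that is no less than the paper itself provides, but if you intend a self-contained proof you should carry out at least this family explicitly, since it is where the over-determinacy genuinely has to be checked.
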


Now using Lemma \ref{x->y} we define the map 
\begin{eqnarray*}
\ovl\sigma:& \cV_1 \qq\qq \longrightarrow &\cV_2,\\
&V_1(x_2,\cd,x_{2n-1})\mapsto &V_2(y_1,\cd,y_{2n-2}).
\end{eqnarray*}
Then we have the following result.
\begin{pro}
The map $\ovl\sigma:\cV_1\longrightarrow \cV_2$ is a bi-positive birational isomorphism with the inverse
positive rational map
\begin{eqnarray*}
\ovl\sigma^{-1}:&\cV_2\qq\qq \longrightarrow &\cV_1,\\
&V_2(y_1,\cd,y_{2n-2})\mapsto &V_1(x_2,\cd,x_{2n-1}).
\end{eqnarray*} 
given by:
\begin{eqnarray*}
&&\displaystyle x_k= \frac{y_k}{y_n}\left(\frac{y_{1}}{y_{n}} + \frac{y_2}{y_{n+1}} + \cd + \frac{y_k}{y_{n+k-1}}\right)^{-1}, \, 2 \leq k \leq n-1,\\
&&\displaystyle x_{n+l}= y_{n+l}\left(\frac{y_{1}}{y_{n}} + \frac{y_{2}}{y_{n+1}} + \cd + \frac{y_l}{y_{n+l-1}}\right), \, 1 \leq k \leq n-2,\\
&&\displaystyle x_n = \frac{1}{y_n}, \, \, x_{2n-1}= \left(\frac{y_{1}}{y_{n}} + \frac{y_{2}}{x_{n+1}} + \cd + 
\frac{y_{n-1}}{y_{2n-2}}\right).
\end{eqnarray*}
\end{pro}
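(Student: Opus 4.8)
The plan is to pass to the natural torus coordinates on $\cV_1$ and $\cV_2$. The parametrizations $x=(x_2,\cd,x_{2n-1})\mapsto V_1(x)$ and $y=(y_1,\cd,y_{2n-2})\mapsto V_2(y)$ are birational onto their images, since the parameters are recovered rationally from the coefficients: for instance $x_{n+i}=X_{i,n+1}$ when $i\neq n$, after which the remaining $x$'s are extracted from the coefficients $X_{in}$ by successive elimination, and symmetrically for the $Y_{ij}$. Under these identifications $\ovl\sigma$ is, by its definition together with Lemma \ref{x->y}, exactly the coordinate change $x\mapsto y(x)$ written out in that lemma. Each $y_i(x)$ there is a Laurent monomial in $x$ divided by a positive sum of Laurent monomials, or such a sum multiplied by a monomial; in either case it is a positive rational function, so $\eta\circ\ovl\sigma$ is positive for every character $\eta$ of the target torus, i.e. $\ovl\sigma$ is a positive rational morphism. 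An identical inspection of the formulas in the statement shows that the candidate map $\rho\colon V_2(y)\mapsto V_1(x(y))$ is positive as well. So the assertion reduces to proving that $\rho$ and $\ovl\sigma$ are mutually inverse.

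For this I would substitute the expressions $y=y(x)$ of Lemma \ref{x->y} into the formulas of the statement and check $x(y(x))=x$ coordinate by coordinate, and symmetrically substitute $x=x(y)$ into Lemma \ref{x->y} to check $y(x(y))=y$. Each is a telescoping computation: in $x_k=\frac{y_k}{y_n}\bigl(\frac{y_1}{y_n}+\cd+\frac{y_k}{y_{n+k-1}}\bigr)^{-1}$, after the substitution the partial sum collapses against the monomials just introduced and leaves precisely the factor that cancels $\frac{y_k}{y_n}$ back to $x_k$; the $x_{n+l}$ are handled the same way, and $x_n=1/y_n$, $x_{2n-1}$ are the boundary instances of the same cancellation. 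This yields $\rho\circ\ovl\sigma=\id_{\cV_1}$ and $\ovl\sigma\circ\rho=\id_{\cV_2}$, hence $\ovl\sigma$ is a birational isomorphism with $\ovl\sigma^{-1}=\rho$; combined with the two positivity checks, $\ovl\sigma$ is bi-positive.

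A more conceptual route to the inversion, avoiding the explicit telescoping, is to note that Lemma \ref{x->y} reads $V_2(y(x))=\tfrac{1}{x_n}\,V_1(x)$, and to solve the companion over-determined (but, as in the lemma, consistent) relation $V_1(x)=b\,V_2(y)$ uniquely for $b$ and $x$ in terms of $y$ --- which is precisely what the formulas of the statement encode, with $b=1/y_n$ --- so that uniqueness of the solution in each direction makes $\ovl\sigma$ and $\rho$ mutually inverse for purely formal reasons. Either way the only genuine work is this cancellation bookkeeping, and I expect the point requiring most care to be keeping straight the slightly special roles of the ``boundary'' variables $x_n$, $x_{2n-1}$, $y_n$, $y_{2n-2}$; everything else is routine.
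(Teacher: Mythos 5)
Your proposal is correct and matches the paper's argument, which likewise disposes of bi-positivity by inspecting the explicit formulas and establishes the inverse by direct calculation (the telescoping identity $\frac{y_1}{y_n}+\sum_{j=2}^{k}\frac{y_j}{y_{n+j-1}}=\frac{x_n}{\frac{x_{k+1}}{x_{n+k}}+\cd+\frac{x_n}{x_{2n-1}}}$ you describe is exactly the needed bookkeeping). Your extra remarks on recovering the parameters from the coefficients $X_{ij}$, $Y_{ij}$ and on the alternative route via the over-determined relation $V_1(x)=b\,V_2(y)$ are sound but not needed beyond what the paper does.
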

\begin{proof}
The fact that $\ovl\sigma$ is a bi-positive birational map follows from the explicit formulas. The rest 
follows by direct calculation.
\end{proof}

It is known (see \cite{KNO} and 2.3) that $\cV_1$ (resp. $\cV_2$) is a geometric crystal
for $\ge_0$ (resp. $\ge_n$). Indeed,  we have the $\ge_0$-geometric
crystal structure on $\cV_1$ by setting
$Y(x)=Y(x_{2n-1},\cd,x_2):=Y_{n-1}(x_{2n-1})\cd Y_{2}(x_2)$, 
$V_1(x)=V_1(x_{2n-1},\cd,x_2):=Y(x)(1,2)$ and 
\[
e_i^c(V_1(x)):=e_i^c(Y(x))(1,2),\q
\gamma_i(V_1(x))=\gamma_i(Y(x)),\q
\vep_i(V_1(x)):=\vep_i(Y(x)),
\]
since the vector $(1,2)$ is the highest weight vector with respect to $\ge_0$.
Similarly, we obtain the $\ge_n$-geometric crystal structure on $\cV_2$.
Hence the actions of $e_i^c , \gamma_i,  \vep_i$ (resp. 
$\ovl{e}_i^c , \ovl{\gamma}_i,  \ovl{\vep}_i)$
on $V_1(x)$ (resp. $V_2(y)$) are described explicitly for $i \in I_0$
(resp. $i \in I_n$) by the formula in 2.3. In particular, 
the actions of $\ovl{e}_0^c , \ovl{\gamma}_0$ and $\ovl{ \vep}_0$ on $V_2(y)$ are given by:
\begin{eqnarray*}
&&\displaystyle \ovl{e}_0^c(V_2(y)) = V_2(y_1, \cd , cy_n, \cd , y_{2n-2}) , \\
&&\displaystyle \ovl{\gamma}_0(V_2(y)) = \frac{y_n^2}{y_1y_{n+1}} , \qquad \ovl{\vep}_0(V_2(y)) = \frac{y_{n+1}}{y_n}.
\end{eqnarray*}
In order to make 
$\cV_1$ a $A_n^{(1)}$- geometric crystal we need to define the actions of $e_0^c , \gamma_0$ 
and $ \vep_0$ on $V_1(x)$. 
We define the action of $e_0^c$ on $V_1(x)$ by
\begin{equation}
e_0^cV_1(x)=\ovl\sigma^{-1}\circ \ovl{e}_0^c\circ
\ovl\sigma(V_1(x))).
\label{e0}
\end{equation}
and the actions of  $\gamma_0$ 
and $\vep_0$  on $V_1(x)$ by 
\begin{equation}
\gamma_0(V_1(x))=\ovl{ \gamma}_0( \ovl\sigma(V_1(x))),\qq
\vep_0(V_1(x)):=\ovl{\vep}_0 (\ovl\sigma(V_1(x))).
\label{wt0}
\end{equation}
\begin{thm}
Together with the actions of $ e_0^c, \gamma_0$ and $\vep_0$ on $V_1(x)$ given in (\ref{e0}), 
(\ref{wt0}), we obtain a
positive affine geometric crystal $\cV(\TY(A,1,n)):=
(\cV_1,\{e_i\}_{i\in I},
\{\gamma_i\}_{i\in I},\\
\{\vep_i\}_{i\in I})$
$(I=\{0,1, \cd ,n\})$, whose explicit form is as follows:
first we have $e_i^c(V_1(x))$, $\gamma_i(V_1(x))$ and $\vep_i(V_1(x))$
for $i=1,2, \cd , n$ from the formula (\ref{eici}), 
(\ref{vep-i})
and (\ref{gamma-i}).
\begin{eqnarray*}
e_i^c(V_1(x)) = \begin{cases}
V_1(x_2, \cd , cx_{n+1}, \cd , x_{2n-1}) , \qq i = 1,\\
V_1(x_2, \cd , c_ix_i, \cd , \frac{c}{c_i}x_{n+i}, \cd , 
x_{2n-1}), \, \, 2 \leq i \leq n-1,\\
V_1(x_2, \cd , cx_n, \cd , x_{2n-1}), \qq i=n
\end{cases}
\end{eqnarray*}
where 
\[
c_i = \frac{c(x_ix_{n+i}+x_{i+1}x_{n+i-1})}{cx_ix_{n+i}+x_{i+1}x_{n+i-1}}.
\]

\begin{eqnarray*}
\gamma_i(V_1(x)) = \begin{cases}
\displaystyle \frac{x_{n+1}^2}{x_2x_{n+2}} , & i = 1,\\
\displaystyle \frac{x_i^2x_{n+i}^2}{x_{i-1}x_{i+1}x_{n+i-1}x_{n+i+1}} , &
 2 \leq i \leq n-1,\\
\displaystyle \frac{x_n^2}{x_{n-1}x_{2n-1}} , & i=n.
\end{cases}
\end{eqnarray*}

\begin{eqnarray*}
\vep_i(V_1(x)) = \begin{cases}
\displaystyle \frac{x_{n+2}}{x_{n+1}} , & i = 1,\\
\displaystyle \frac{x_{n+i+1}}{x_{n+i}} +
 \frac{x_{i+1}x_{n+i-1}x_{n+i+1}}{x_{i}x_{n+i}^2} , 
& 2 \leq i \leq n-2,\\
\displaystyle \frac{1}{x_{2n-1}} + 
\frac{x_{n}x_{2n-2}}{x_{n-1}x_{2n-1}^2} , & i = n-1,\\
\displaystyle \frac{x_{2n-1}}{x_{n}} , & i=n.
\end{cases}
\end{eqnarray*}

Using (\ref{e0}) and (\ref{wt0}),  
the explicit actions of  $e_0^c$, $\vep_0$ and $\gamma_0$ on $V_1(x)$
are given by:
\begin{eqnarray*}
&&\gamma_0(V_1(x))= \frac{1}{x_nx_{n+1}}, \, \, \, 
\vep_0(V_1(x))=x_{n+1} \left(\frac{x_2}{x_{n+1}} + \frac{x_3}{x_{n+2}} +
\cd +  \frac{x_n}{x_{2n-1}}\right),\\
&&e_0^c(V_1(x))= V_1(x') = V_1(x_2', x_3', \cd , x_{2n-1}'),
\end{eqnarray*}
\text{where}\\
\begin{eqnarray*}
\begin{cases}
\displaystyle x_k' = x_k\cdot\frac{\frac{x_2}{x_{n+1}}
+ \frac{x_3}{x_{n+2}} + \cd 
+ \frac{x_n}{x_{2n-1}}}{c\left(\frac{x_2}{x_{n+1}}
+\frac{x_3}{x_{n+2}} + \cd + \frac{x_k}{x_{n+k-1}}\right) 
+ \left(\frac{x_{k+1}}{x_{n+k}} + \cd +
 \frac{x_n}{x_{2n-1}}\right)}\, , 
& 2\leq k <n,
\\
\displaystyle x_n' = \frac{x_n}{c}, \qq x_{n+1}' = \frac{x_{n+1}}{c},\\
\displaystyle x_{n+l}' = x_{n+l} \cdot
\frac{c\left(\frac{x_2}{x_{n+1}} + \frac{x_3}{x_{n+2}} 
+ \cd + \frac{x_l}{x_{n+l-1}}\right) 
+ \left(\frac{x_{l+1}}{x_{n+l}} 
+ \cd + \frac{x_n}{x_{2n-1}}\right)}{c\left(\frac{x_2}{x_{n+1}} 
+ \frac{x_3}{x_{n+2}} + \cd + 
\frac{x_n}{x_{2n-1}}\right)}\,, & 2 \leq l <n.
\end{cases}
\end{eqnarray*}
\end{thm}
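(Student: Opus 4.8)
The plan is to verify the theorem by confirming each of the axioms of a positive affine geometric crystal (Definition \ref{def-gc} together with the positivity requirements of Definition 2.5) for the quadruple $\cV(\TY(A,1,n)) = (\cV_1,\{e_i\},\{\gamma_i\},\{\vep_i\})$, using the explicit formulas produced above. First I would observe that for the index set $I_0 = I\setminus\{0\}$ everything is already established: $\cV_1$ carries a $\ge_0$-geometric crystal structure by the discussion preceding the theorem (since $(1,2)$ is the $\ge_0$-highest weight vector and we may apply the formulas \eqref{eici}, \eqref{vep-i}, \eqref{gamma-i} for the Schubert-cell geometric crystal attached to $w_1$), and the displayed formulas for $e_i^c$, $\gamma_i$, $\vep_i$ with $1\le i\le n$ are just the specializations of those formulas. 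So the real content is: (a) that the newly defined $e_0^c$, $\gamma_0$, $\vep_0$ (transported from $\cV_2$ via $\ovl\sigma$) satisfy conditions (i)–(iv) in which the index $0$ participates; (b) that the resulting structure is positive; and (c) the correctness of the explicit formulas for $e_0^c(V_1(x))$, $\gamma_0$, $\vep_0$.

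For (c) I would argue as follows. By Lemma \ref{x->y} and the Proposition, $\ovl\sigma\colon\cV_1\to\cV_2$ is a bi-positive birational isomorphism with the stated inverse. The action $\ovl e_0^c$ on $\cV_2$ is the $\ge_n$-Schubert-cell action attached to $w_2$, which merely rescales $y_n\mapsto cy_n$, and $\ovl\gamma_0(V_2(y)) = y_n^2/(y_1 y_{n+1})$, $\ovl\vep_0(V_2(y)) = y_{n+1}/y_n$, as recorded. Thus $\gamma_0(V_1(x))$ and $\vep_0(V_1(x))$ are obtained by substituting the formulas of Lemma \ref{x->y} for $y_1,y_n,y_{n+1}$; one checks $y_n^2/(y_1 y_{n+1}) = 1/(x_n x_{n+1})$ and $y_{n+1}/y_n = x_{n+1}(x_2/x_{n+1}+\cdots+x_n/x_{2n-1})$ by direct substitution. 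For $e_0^c$ one feeds $\ovl\sigma(V_1(x)) = V_2(y_1,\dots,y_{2n-2})$ through $y_n\mapsto cy_n$ and then back through $\ovl\sigma^{-1}$; since only $y_n$ changes, and the inverse formulas express each $x_k$ and $x_{n+l}$ rationally in the $y$'s, one substitutes and simplifies. The cleanest bookkeeping is to note that $\ovl e_0^c$ scales the quantity $S_j := \sum_{m=j}^{n} x_m/x_{n+m-1}$-type partial sums in a controlled way: under $y_n\mapsto cy_n$ one has $y_{n+l}$ scaled by a factor interpolating between $c$ and $1$ according to how much of the sum $\sum x_{m}/x_{n+m-1}$ lies to the left of position $l$, which is exactly the shape of the claimed $x_{n+l}'$; similarly $x_n'=x_n/c$, $x_{n+1}'=x_{n+1}/c$ come from $y_n\mapsto cy_n$, $y_{n+1}$ fixed and the inverse formula $x_n=1/y_n$, $x_{n+1}=y_{n+1}(y_1/y_n)$. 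The formula for $x_k'$, $2\le k<n$, follows the same way from $x_k = (y_k/y_n)(y_1/y_n+\cdots+y_k/y_{n+k-1})^{-1}$. I would present this as a single substitution computation rather than case-by-case.

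For (a) and (b): positivity of $\gamma_0\circ\theta$ and $\vep_0\circ\theta$ and of $e_{0,\theta}$ is immediate from the explicit formulas, which are manifestly ratios of positive (subtraction-free) Laurent polynomials in $x_2,\dots,x_{2n-1}$ — this is where bi-positivity of $\ovl\sigma$ and $\ovl\sigma^{-1}$ (the Proposition) is essential, since positivity is preserved under composition of positive rational maps (Lemma \ref{TTT}). Condition (i) ($\{1\}\times\cV_1\subset\mathrm{dom}(e_0)$) holds because $e_0^1 = \ovl\sigma^{-1}\circ\ovl e_0^1\circ\ovl\sigma = \mathrm{id}$. Condition (ii) for $j=0$, i.e. $\gamma_0(e_i^c(x)) = c^{a_{i0}}\gamma_0(x)$, and for $i=0$, i.e. $\gamma_j(e_0^c(x)) = c^{a_{0j}}\gamma_j(x)$, is checked by substituting the explicit $e_i^c$, $e_0^c$ into the explicit $\gamma$'s; the only nonzero $a_{i0}$ are $a_{10}=a_{n0}=-1$, and indeed $e_1^c$ rescales $x_{n+1}\mapsto cx_{n+1}$ so $\gamma_0 = 1/(x_nx_{n+1})$ picks up $c^{-1}$, while $e_n^c$ rescales $x_n\mapsto cx_n$ giving the same, and for $2\le i\le n-1$ the variables $x_n,x_{n+1}$ are untouched so $\gamma_0$ is unchanged ($a_{i0}=0$); symmetrically using the explicit $e_0^c$ one checks $\gamma_j\circ e_0^c = c^{a_{0j}}\gamma_j$ — here $e_0^c$ rescales $x_n,x_{n+1}$ by $1/c$ and moves the $x_{n+l}$'s, and one verifies $\gamma_1 = x_{n+1}^2/(x_2x_{n+2})$ picks up $c^{-2}\cdot(\text{correction from }x_{n+2})$ — in fact one must check this collapses to $c^{-1}=c^{a_{01}}$, which it does after using the precise form of $x_{n+2}'$. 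Condition (iv) for $i=0$ is $\vep_0(e_0^c(x)) = c^{-1}\vep_0(x)$, immediate from $\ovl\vep_0(\ovl e_0^c V_2(y)) = c^{-1}\ovl\vep_0(V_2(y))$ and the intertwining \eqref{e0}, \eqref{wt0}; and $\vep_0(e_j^c x) = \vep_0(x)$ when $a_{0j}=0$, i.e. $2\le j\le n-1$, which holds because such $e_j^c$ fixes $x_{n+1}$ and rescales $x_j\mapsto c_j x_j$, $x_{n+j}\mapsto (c/c_j)x_{n+j}$ in a way that preserves $\sum_m x_m/x_{n+m-1}$ (the two affected terms $x_j/x_{n+j-1}$ and $x_{j+1}/x_{n+j}$ transform so their sum is invariant — this is the standard feature of the Schubert-cell action and is the crux of that subcase).

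The main obstacle is the braid/commutation relations in condition (iii) involving $e_0$. Since $\TY(A,1,n)$ is simply-laced, one needs $e_0^{c_1}e_j^{c_2} = e_j^{c_2}e_0^{c_1}$ whenever $a_{0j}=0$ (that is $2\le j\le n-1$, for $n\ge 4$; when $n=2,3$ only the Serre-type relations occur) and the length-three relation $e_0^{c_1}e_j^{c_1c_2}e_0^{c_2} = e_j^{c_2}e_0^{c_1c_2}e_j^{c_1}$ for $j\in\{1,n\}$. The $\ge_0$-relations (those not involving $0$) are free because $\cV_1$ is already a $\ge_0$-geometric crystal. For the relations with $0$, the conceptual route is to transport to $\cV_2$: $\ovl\sigma$ intertwines $e_0$ with $\ovl e_0$ and also intertwines $e_j$ (for $j\ne 0,n$, which lie in $I_0\cap I_n$) with $\ovl e_j$ up to the known compatibility, so a relation among $\{e_0,e_1,\dots,e_{n-1}\}$ on $\cV_1$ is equivalent to the corresponding relation among $\{\ovl e_0,\dots,\ovl e_{n-1}\}$ on $\cV_2$, and the latter is part of the already-established $\ge_n$-geometric crystal structure on $\cV_2$ — this handles every relation not involving $e_n$. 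The genuinely new relation is the length-three relation between $e_0$ and $e_n$ (indices $0$ and $n$ are adjacent in the affine diagram but $n\notin I_n$ is not where $\ovl\sigma$ helps, and $0\notin I_0$): neither $\cV_1$'s $\ge_0$-structure nor $\cV_2$'s $\ge_n$-structure sees both $e_0$ and $e_n$ simultaneously. I expect to verify $e_0^{c_1}e_n^{c_1c_2}e_0^{c_2} = e_n^{c_2}e_0^{c_1c_2}e_n^{c_1}$ by brute-force substitution of the explicit formulas: $e_n^c$ is the clean rescaling $x_n\mapsto cx_n$, while $e_0^c$ has the form displayed above, so both sides reduce to explicit rational functions of $(x_2,\dots,x_{2n-1})$ and one checks they agree. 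The bookkeeping — tracking how the partial sums $\sum_{m\ge j} x_m/x_{n+m-1}$ rescale under the interleaved applications — is the laborious but unobstructed core of the argument; I would do it once and remark that positivity of all maps involved guarantees the identity of rational functions is the identity one wants. As a sanity check one can also verify that the resulting $\mathcal{UD}$ recovers the $A_n^{(1)}$-crystal combinatorics of Section 3 (done in Section 5), which independently confirms (iii).
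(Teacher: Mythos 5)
Your proposal follows essentially the same route as the paper: the $\ge_0$-structure disposes of all relations not involving the index $0$, the intertwining $\ovl\sigma\circ e_i^c=\ovl e_i^c\circ\ovl\sigma$ ($1\le i\le n-1$) transports the $e_0$--$e_i$ commutations and the $e_0$--$e_1$ Verma relation to the already-known $\ge_n$-crystal structure on $\cV_2$, the conditions on $\gamma_0,\gamma_i,\vep_0$ and positivity are read off from the explicit formulas, and the one genuinely new relation is the $e_0$--$e_n$ Verma relation checked by direct computation. The only difference is organizational: the paper packages your ``brute-force partial-sum bookkeeping'' for $e_0^{c_1}e_n^{c_1c_2}e_0^{c_2}=e_n^{c_2}e_0^{c_1c_2}e_n^{c_1}$ into the single identity (\ref{k-formula}) expressing $x'_k/x'_{k+n-1}$ through the partial sums $X_k,\wtil X_k$, which is precisely the computation you defer.
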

\begin{proof}
Since the positivity is clear from the explicit formulas, 
it suffices to show that $\cV(\TY(A,1,n)):=
(V_1(x),\{e^c_i\}_{i\in I}, \{\gamma_i\}_{i\in I},\{\vep_i\}_{i\in I})$ 
satisfies the relations in Definition (\ref{def-gc}). 
Indeed, since $\cV_1$ is a $\ge_0$ geometric crystal 
we need to check the relations involving the $0$-index:
\begin{enumerate}
\item[(1)] $\gamma_0(e_i^c(V_1(x))) = c^{a_{i0}} \gamma_0(V_1(x)), 1 \leq i \leq n, $
\item[(2)] 
$\gamma_i(e_0^c(V_1(x))) = c^{a_{0i}} \gamma_i(V_1(x)), \, \,1\leq i  \leq  n, $
\item[(3)] $\vep_0(e_0^c(V_1(x))) = c^{-1}\vep_0(V_1(x)), $
\item[(4)] $e_0^ce_1^{cd}e_0^d = e_1^de_0^{cd}e_1^c, $
\item[(5)] $e_0^ce_n^{cd}e_0^d = e_n^de_0^{cd}e_n^c , $
\item[(6)]$e_0^ce_i^d = e_i^de_0^c, \, \, 2 \leq i \leq n-1.$
\end{enumerate}
Since 
\begin{eqnarray*}
\gamma_0(e_i^c(V_1(x))) = \begin{cases}
\displaystyle \frac{c^2}{x_nx_{n+1}}, & i = 0, \\
\displaystyle \frac{1}{cx_nx_{n+1}}, & i = 1, n , \\
\displaystyle \frac{1}{x_nx_{n+1}}, & 2 \leq i \leq n-1,
\end{cases}
\end{eqnarray*}
and
\begin{eqnarray*}
\gamma_i(e_0^c(V_1(x))) = \begin{cases}
\displaystyle \frac{x_{n+1}^2}{cx_nx_{n+2}}, & i = 1, \\
\displaystyle \frac{x_n^2}{cx_{n-1}x_{2n-1}}, & i = n , \\
\displaystyle 
\frac{x_i^2x_{n+i}^2}{x_{i-1}x{i+1}x_{n+i-1}x_{n+i+1}}, &2 \leq i \leq n-1,
\end{cases}
\end{eqnarray*} 
we have (1) and (2) hold. We also have (3) hold since 
$\cV_2$ is a $\ge_n$-geometric crystal and hence
\begin{eqnarray*}
&&\vep_0(e_0^c(V_1(x))) = \ovl{\vep}_0\ovl\sigma\ovl\sigma^{-1}\ovl{e}_0^c\ovl\sigma(V_1(x)) 
= \ovl{\vep}_0\ovl{e}_0^c(V_2(y))\\
&& = \ovl{\vep}_0(V_2(y')) = \frac{y_{n+1}'}{y_n'} = \frac{y_{n+1}}{cy_n}
=c^{-1}\vep_0(V_1(x)).
\end{eqnarray*} 
By direct calculations we see that on $V_1(x)$ we have
\begin{eqnarray*}
\ovl{\sigma} \circ e_i^c = \ovl{e}_i^c \circ \ovl{\sigma}, \, \, \text{for} \qq 1 \leq i \leq n-1.
\end{eqnarray*}
Hence for $2 \leq i \leq n-1$, we have
\begin{eqnarray*}
&&e_0^ce_i^d = (\ovl{\sigma}^{-1}\ovl{e}_0^c\ovl{\sigma})(\ovl{\sigma}^{-1}\ovl{e}_i^d\ovl{\sigma})
= \ovl{\sigma}^{-1}\ovl{e}_0^c\ovl{e}_i^d\ovl{\sigma} \\ 
&&= \ovl{\sigma}^{-1}\ovl{e}_i^d\ovl{e}_0^c\ovl{\sigma} = e_i^de_0^c ,
\end{eqnarray*}
and 
\begin{eqnarray*}
&&e_0^ce_1^{cd}e_0^d = (\ovl{\sigma}^{-1}\ovl{e}_0^c\ovl{\sigma})(\ovl{\sigma}^{-1}\ovl{e}_1^{cd}\ovl{\sigma})
(\ovl{\sigma}^{-1}\ovl{e}_0^d\ovl{\sigma}) \\
&&= \ovl{\sigma}^{-1}\ovl{e}_0^c\ovl{e}_i^{cd}\ovl{e}_0^d\ovl{\sigma} = \ovl{\sigma}^{-1}\ovl{e}_1^d\ovl{e}_0^{cd}\ovl{e}_1^c\ovl{\sigma} = e_1^de_0^{cd}e_1^c ,
\end{eqnarray*}
since $\cV_2$ is a $\ge_n$-geometric crystal. Therefore, (4) and (6)
 hold. 

Now for $k=2,\cd,n-1$ we set $X=X_k+\wtil X_k$ where 
\[
 X_k=\frac{x_2}{x_{n+1}}
+ \frac{x_3}{x_{n+2}} + \cd 
+ \frac{x_k}{x_{k+n-1}},\qq
\wtil X_k=\frac{x_{k+1}}{x_{k+n}}
+ \frac{x_{k+2}}{x_{k+n+1}} + \cd 
+ \frac{x_n}{x_{2n-1}}.
\]
Observe that for any $k,l=2,\cd,n-1$ we have $X=X_k+\wtil X_k=X_l+\wtil X_l$.
Recall that $e_0^c(V_1(x))=V_1(x')= V_1(x'_2,\cd,x'_{2n-1})$. Now we have 
\begin{equation}
\frac{x'_k}{x'_{k+n-1}}=
\frac{cX^2}{c-1}\left(\frac{1}{cX_{k-1}+\wtil X_{k-1}}-
\frac{1}{cX_{k}+\wtil X_{k}}\right)\q
(3\leq k\leq n-1,\,\,c\ne1).
\label{k-formula}
\end{equation}
Using Equation(\ref{k-formula}) we can easily see that (5) holds which
completes the proof.
\end{proof}
\renewcommand{\thesection}{\arabic{section}}
\section{Ultra-discretization of $\cV(\TY(A,1,n))$}
\setcounter{equation}{0}
\renewcommand{\theequation}{\thesection.\arabic{equation}}

We denote the positive structure on $\cV = \cV(\TY(A,1,n))$ as in the 
previous section by 
$\theta:T'\seteq(\bbC^\times)^{2n-2} \longrightarrow \cV$
($x\mapsto V_1(x)$).
Then by Corollary \ref{cor-posi}
we obtain the ultra-discretization 
$\cX = {\mathcal{UD}}(\cV,T',\theta)$ 
which is a Kashiwara's crystal. 
Now we show that the conjecture in \cite{KNO} 
holds for $\ge = \TY(A,1,n), \, i = 2$ by giving an explicit 
isomorphism of crystals between $\cX$ and $B^{2, \infty}$.
In order to show this isomorphism, we need
the explicit crystal structure on 
$\cX:={\mathcal{UD}}(\chi,T',\theta)$.
Note that $\cX = \ZZ^{2n-2}$ as a set .
In $\cX$, we use the same notations $c,x_0,x_2,\cd,x_{2n-1}$
for variables as in  $\cV$.

For $x=(x_2,x_1,\cd,x_{2n-1})\in\cX$, by applying the ultra-discretization functor
$\mathcal{UD}$ it follows from the 
results in the previous section that the functions
$\wt_i= \mathcal{UD}(\gamma_i), \, \vep_i = \mathcal{UD}(\vep_i)$ and $\mathcal{UD}(e_i^c)$ for
 $i=0,1, \cd , n$ are given by:
\begin{eqnarray*}
\wt_i(x) = \begin{cases}
-x_n-x_{n+1}, & i=0,\\
-x_2+2x_{n+1}-x_{n+2}, & i=1,\\
2x_2-x_3-x_{n+1}+2x_{n+2}-x_{n+3}, & i= 2, \\
-x_{i-1}+2x_i-x_{i+1}-x_{n+i-1}+2x_{n+i}-x_{n+i+1}, & 3\leq i <n,\\
-x_{n-1}+2x_n-x_{2n-1}, & i=n.
\end{cases}
\end{eqnarray*}
\begin{eqnarray*}
\vep_i(x) = \begin{cases}
x_{n+1}+\text{max}_{2\leq k \leq n}(\beta_k), & i=0,\\
-x_{n+1}+x_{n+2}, & i=1,\\
\text{max} (x_{n+i+1}-x_{n+i}, -x_i+x_{i+1}+x_{n+i-1}-2x_{n+i}+x_{n+i+1}), &
 2\leq i \leq n-2,\\
\text{max}(-x_{2n-1}, -x_{n-1}+x_n+x_{2n-2}-2x_{2n-1}), & i=n-1,\\
-x_n+x_{2n-1}, & i=n,
\end{cases}
\end{eqnarray*}
where $\beta_k:= x_k-x_{n+k-1}$ for $2 \leq k \leq n$.
\begin{eqnarray*}
\mathcal{UD}(e_i^c)(x)= \begin{cases}
(x_2+C_2, \cd , x_{n-1}+C_{n-1},x_n-c,x_{n+1}-c,\\
x_{n+2}-c-C_2, \cd , x_{2n-1}-c-C_{n-1}),& i=0, \\
(x_2, \cd , x_n,x_{n+1}+c,x_{n+2}, \cd , x_{2n-1}), & i=1,\\
(x_2, \cd , x_i+\ovl{c}_i, \cd , x_{n+i}+c-\ovl{c}_i, \cd , x_{2n-1}), 
& 2\leq i <n,\\
(x_2, \cd , x_{n-1},x_n+c,x_{n+1}, \cd , x_{2n-1}), & i= n,
\end{cases}
\end{eqnarray*}
where
\begin{eqnarray*}
&&C_k= \text{max}_{2\leq j\leq n}(\beta_j)
-\text{max}(\text{max}_{2\leq j\leq k}(c+\beta_j) , 
 \text{max}_{k < j\leq n}(\beta_j)), \, 2 \leq k <n,\\
&& \ovl{c}_i=c + \text{max}(x_i+x_{n+i}, x_{i+1}+x_{n+i-1}) 
- \text{max}(c+x_i+x_{n+i}, x_{i+1}+x_{n+i-1}),\,\, 2\leq i <n.
\end{eqnarray*}
Note that the Kashiwara operators are $\et{i}(x) = {\mathcal{UD}}e_i^c(x)\mid_{c=1}$ and 
$\ft{i}(x) =$ \\ $ {\mathcal{UD}}e_i^c(x)\mid_{c=-1}$ on $\cX$. In particular, for $x \in \cX$, we have
\begin{eqnarray}
\label{f1n}
 \begin{cases}
\ft{1}(x) = (x_2, \cd , x_{n+1}-1, \cd , x_{2n-1}),\\
\ft{n}(x) = (x_2, \cd , x_n-1, \cd , x_{2n-1}),
\end{cases} 
\end{eqnarray}
and for $2 \leq i \leq n-1$,
\begin{eqnarray}
\label{fi}
\ft{i}(x) = \begin{cases}
(x_2, \cd ,  x_{n+i}-1, \cd , x_{2n-1}), \qq \text{if} \, \, \, \beta_i > \beta_{i+1},\\
(x_2, \cd , x_i-1, \cd , x_{2n-1}), \qq \text{if} \, \, \, \beta_i \leq \beta_{i+1}.
\end{cases} 
\end{eqnarray}
To determine the explicit action of $\ft{0}$ we define conditions:
\begin{eqnarray}
\label{phi}
(\phi_j) : \qq \beta_2, \cd , \beta_{j-1} \leq \beta_j > \beta_{j+1}, \cd , \beta_n
\end{eqnarray}
for each $2 \leq j \leq n$ where we assume $\beta_1 = 0 = \beta_{n+1}$.  Note that
under condition $(\phi_j)$ we have:
\begin{eqnarray*}
C_2 =  \cd = C_{j-1} = 0, \, \, \, \text{and}  \, \, \, C_j = \cd = C_{n-1} = 1.
\end{eqnarray*}
Hence for $x \in \cX$ and $2 \leq j \leq n$ we have 
\begin{eqnarray*}
&&\ft{0}(x) = (x_2, \cd , x_{j-1}, x_j+1, x_{j+1}+1, \cd , x_{n+j-1}+1, x_{n+j}, \cd , x_{2n-1}), \\
&&\text{if} \, \,  \text{condition} \, \,  (\phi_j) \, \,  \text{hold}.
\end{eqnarray*}
\begin{thm}
\label{ultra-d}
The map 
\[
\begin{array}{cccc}
\Omega\cl&\cX&\longrightarrow& B^{2,\infty},\\
&(x_2,\cd, x_{2n-1})&\mapsto&  b=(b_{ji})_{1\leq j\leq 2, j\leq i \leq j+n-1},
\end{array}
\]
defined by
\begin{eqnarray*}
&& b_{11}= x_{n+1}, \, \, \, b_{1i}= x_{n+i}-x_{n+i-1}, \, \, \, 2\leq i \leq n-1,\, \, \,
b_{1n}= -x_{2n-1}, \\
&& b_{22}= x_2, \, \, \,  b_{2i}=x_i-x_{i-1}, \, \, \, 3\leq i \leq n, \, \, \, 
b_{2,n+1}= -x_n,
\end{eqnarray*}
is an isomorphism of crystals.
\end{thm}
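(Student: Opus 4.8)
The plan is to verify that the explicit bijection $\Omega$ intertwines the Kashiwara operators $\tilde e_i,\tilde f_i$ and the functions $\varepsilon_i,\wt_i$ on both sides, for every $i\in I$. First I would record that $\Omega$ is manifestly a bijection $\bbZ^{2n-2}\to B^{2,\infty}$: the defining equations express $(b_{11},b_{13}-b_{\cdot},\dots)$ as an invertible $\bbZ$-linear change of coordinates, and the two linear constraints $\sum_i b_{1i}=0=\sum_i b_{2i}$ defining $B^{2,\infty}$ are automatically satisfied (they are telescoping sums), so no inequality conditions intervene in the limit. The inverse is $x_{n+1}=b_{11}$, $x_{n+i}=b_{11}+b_{12}+\cdots+b_{1i}$ for $2\le i\le n-1$, $x_{2n-1}=-b_{1n}$, and similarly $x_k=b_{22}+b_{23}+\cdots+b_{2k}$ for $2\le k\le n$.

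Next I would check compatibility of weights: substituting $\Omega$ into the formulas for $\wt_i(x)$ from Section 5 and comparing with the formulas for $wt_i(b)$ in Section 3. For instance $\wt_1(x)=-x_2+2x_{n+1}-x_{n+2}$ becomes $-b_{22}+2b_{11}-(b_{11}+b_{12})=b_{11}-b_{12}-b_{22}=wt_1(b)$, and the other indices are analogous telescoping checks; I would also match $\varepsilon_i$ against $\varepsilon_i(b)$ — here the key points are the ``max'' terms: $\max(\beta_j)_{2\le j\le n}$ with $\beta_j=x_j-x_{n+j-1}$ should correspond, under $\Omega$, to $-\Delta=-\min_m\Delta(m)$ appearing in $\varepsilon_0(b),\varphi_0(b)$. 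Indeed $\beta_j=x_j-x_{n+j-1}=(b_{22}+\cdots+b_{2j})-(b_{11}+\cdots+b_{1,j-1})=-\Delta(j)+(\text{const})$ after a short computation, so $\max_j\beta_j$ translates into $-\Delta$, and $\varepsilon_0(x)=x_{n+1}+\max_k\beta_k$ becomes $b_{11}-\Delta$... wait, one must be careful: it should land on $-b_{2,n+1}-\Delta$, which forces the identity $x_{n+1}+\max_k\beta_k \equiv x_n+$ (the same max shifted), reflecting that in the limit $B^{2,\infty}$ the two expressions for $\Delta$ via $(E_m)$ and $(F_m)$ agree; I expect this bookkeeping to require care with the boundary conventions $\beta_1=\beta_{n+1}=0$.

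Then comes the operator check. For $\tilde f_1,\tilde f_n$ this is immediate from \eqref{f1n} and the definition of $\Omega$: decreasing $x_{n+1}$ by $1$ decreases $b_{11}$ by $1$ and leaves the other $b_{1i},b_{2i}$ fixed, matching the $k=1$ rule $b'_{11}=b_{11}-1$, $b'_{12}=b_{12}+1$ — here one uses $b_{12}=x_{n+2}-x_{n+1}$, so lowering $x_{n+1}$ does raise $b_{12}$, consistent. Similarly $\tilde f_n$ lowers $x_{2n-1}$, hence raises $b_{1n}=-x_{2n-1}$, matching. For $2\le i\le n-1$ I would translate the dichotomy $\beta_i>\beta_{i+1}$ versus $\beta_i\le\beta_{i+1}$ in \eqref{fi} into the dichotomy $b_{1i}>b_{2,i+1}$ versus $b_{1i}\le b_{2,i+1}$ of the $\tilde f_k$ rule in Section 3; this is exactly the claim that $\beta_i-\beta_{i+1}$ equals $b_{2,i+1}-b_{1i}$ up to sign, which follows from the explicit $\Omega$. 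Finally, for $\tilde f_0$ I would use the derived formula that under condition $(\phi_j)$ — i.e.\ $\beta_2,\dots,\beta_{j-1}\le\beta_j>\beta_{j+1},\dots,\beta_n$ — one has $\tilde f_0(x)=(x_2,\dots,x_j+1,\dots,x_{n+j-1}+1,x_{n+j},\dots)$; translating, $b_{11}\mapsto b_{11}+1$, $b_{1,j}\mapsto b_{1,j}-1$ (since $x_{n+j-1}$ goes up but $x_{n+j}$ does not), $b_{2,j}\mapsto b_{2,j}+1$, $b_{2,n+1}\mapsto b_{2,n+1}-1$ (since $x_n$ is unchanged when $j<n$, and the $j=n$ case must be treated by hand), which is precisely the $k=0$ rule for $\tilde f_0$ under $(F_j)$. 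The essential point, and the main obstacle, is to prove that the piecewise-linear condition $(\phi_j)$ on the $\beta$'s is equivalent to the condition $(F_j)$ on the $z_i=b_{1i}-b_{2,i+1}$: one shows $z_i = \beta_{i+1}-\beta_i$ (up to the correct indexing shift) under $\Omega$, so that partial sums $z_k+\cdots+z_{m-1}$ become differences $\beta_m-\beta_k$, and then $(F_m)$ reading $\beta_m-\beta_k\le 0$ for $k<m$ and $\beta_m-\beta_k>0$ for $k\ge m$ is literally $(\phi_m)$. Once this dictionary between $z$-partial-sums and $\beta$-differences is established, every remaining verification is a routine substitution; assembling all the checks (and the symmetric statements for $\tilde e_i$, which follow formally since $\tilde e_i$ is the inverse partial operator) shows $\Omega$ is an isomorphism of crystals.
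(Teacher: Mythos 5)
Your proposal follows essentially the same route as the paper: invert $\Omega$ explicitly via telescoping sums, verify $\wt_i$ and $\vep_i$ by direct substitution, and check $\tilde{f}_i$-equivariance, with the key dictionary $z_i=b_{1i}-b_{2,i+1}=\beta_i-\beta_{i+1}$ so that condition $(F_m)$ on $b$ is equivalent to condition $(\phi_m)$ on $x$ --- exactly the pivot of the paper's treatment of the $\tilde{f}_0$ step. A few local slips should be repaired, though none affects the structure. First, $\tilde{f}_n$ lowers $x_n$, not $x_{2n-1}$; under $\Omega$ this decreases $b_{2n}=x_n-x_{n-1}$ and increases $b_{2,n+1}=-x_n$, matching the $k=n$ rule of Section 3 (lowering $x_{2n-1}$ would instead realize the first branch of an $\tilde{f}_{n-1}$ move). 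Second, in the $\tilde{f}_0$ translation $x_n$ does increase, since $x_n$ lies among $x_j,\dots,x_{n+j-1}$ for every $2\le j\le n$, and that is precisely why $b_{2,n+1}=-x_n$ drops by $1$; your parenthetical claiming $x_n$ is unchanged contradicts your (correct) conclusion, and no separate treatment of $j=n$ is needed. Third, the $\vep_0$ bookkeeping you hedge on closes cleanly: $\Delta(m)=x_{n+m-1}-x_{n+1}+x_n-x_m$, hence $\beta_m=-\Delta(m)+(x_n-x_{n+1})$ and $x_{n+1}+\max_m\beta_m=x_n-\Delta=-b_{2,n+1}-\Delta$, which is the paper's computation. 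Finally, fix the sign in the $z$--$\beta$ dictionary: $z_k+\cdots+z_{m-1}=\beta_k-\beta_m$ (not $\beta_m-\beta_k$), which is what makes $(F_m)$ read literally as $(\phi_m)$.
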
 

\begin{proof} First we observe that the map $\Omega^{-1}\cl B^{2,\infty} \longrightarrow \cX$ is given by
$\Omega^{-1}(b) = x = (x_2, \cd , x_{2n-1})$ where
\begin{eqnarray*}
x_i = \sum_{k=2}^i b_{2k}, \qq 2\leq i \leq n, \\
x_{n+i} = \sum_{k=1}^ib_{1k} , \qq 1 \leq i \leq n-1.
\end{eqnarray*}
Hence the map $\Omega$ is bijective. To prove that $\Omega$ is an isomorphism of crystals we need to 
show that it commutes with the actions of $\ft{i}$ and preserves the actions of the functions $\wt_i$ and $\vep_i$. In particular we 
need to show that for $x \in \cX$ and $0 \leq i \leq n$ we have:
\begin{eqnarray*}
&&\Omega (\ft{i}(x)) = \ft{i}(\Omega(x)), \\
&&\wt_i(\Omega(x))=\wt_i(x), \\
&&\vep_i(\Omega(x))=\vep_i(x).
\end{eqnarray*} 
Indeed commutativity of $\Omega$ and $\et{i}$ follows similarly. 
For $x \in \cX$, set $\Omega(x) = b = (b_{ji}) \in B^{2, \infty}$.
First let us check $\wt_i$.
\begin{eqnarray*}
&&\hspace{-10pt} \wt_0(\Omega(x)) = \wt_0(b) = b_{2,n+1}-b_{11} = -x_n -x_{n+1} = \wt_0(x).\\
&&\hspace{-10pt} \wt_1(\Omega(x)) = \wt_1(b) = b_{11}-b_{12} - b_{22} = x_{n+1} -(x_{n+2}- x_{n+1})-x_2\\
&& =-x_2 +2x_{n+1} -x_{n+2} = \wt_1(x).\\
&&\hspace{-10pt} \wt_2(\Omega(x)) = \wt_2(b) = (b_{12}-b_{13}) - (b_{22} - b_{23}) \\
&&= x_{n+2} -x_{n+1}- x_{n+3}+ x_{n+2} +x_2-x_3+x_2\\
&&= 2x_2 - x_3 -x_{n+1} +2x_{n+2} -x_{n+3} = \wt_2(x).\\
&&\hspace{-10pt} \wt_i(\Omega(x)) = \wt_i(b) = (b_{1i}-b_{1,i+1}) +(b_{2i} - b_{2,i+1}) \\
&&= x_{n+i}-x_{n+i-1} -x_{n+i+1}+ x_{n+i}+x_i-x_{i-1}-x_{i+1}+x_i \\
&&=-x_{i-1}+2x_i-x_{i+1}-x_{n+i-1}+2x_{n+i}-x_{n+i+1} = \wt_i(x), \, \, \, \, 3 \leq i \leq n-1.\\
&&\hspace{-10pt} \wt_n(\Omega(x)) = \wt_n(b) = b_{1n} + (b_{2n} - b_{2,n+1}) \\
&&= -x_{2n-1} +x_n- x_{n-1}+ x_n =-x_{n_1}+2x_n-x_{2n-1}= \wt_n(x).\\
\end{eqnarray*}
Next, we shall check $\vep_i$:
\begin{eqnarray*}
&&\hspace{-10pt} \vep_0(\Omega(x)) = \vep_0(b) = -b_{2,n+1}-\Delta \\
&&=-b_{2,n+1}-\text{min}_{2\leq k \leq n}(b_{12}+ \cd +b_{1,k-1}+b_{2,k+1}+ \cd + b_{2n}) \\
&&=x_n-\text{min}_{2\leq k \leq n}(x_{n+k-1}-x_{n+1}+x_n-x_k) \\
&&=x_n+\text{max}_{2\leq k \leq n}(-x_{n+k-1}+x_{n+1}-x_n+x_k) \\
&&= x_{n+1}+ \text{max}(x_k - x_{n+k-1})= \vep_0(x).\\
&&\hspace{-10pt} \vep_1(\Omega(x)) = \wt_1(b) = b_{12} = x_{n+2} - x_{n+1} = \vep_1(x).
\end{eqnarray*}
\begin{eqnarray*}
&&\hspace{-10pt} \vep_i(\Omega(x)) = \vep_i(b) = b_{1, i+1} +(b_{2,i+1} - b_{1i})_+ \\
&&= \text{max} (b_{1,i+1}, b_{1,i+1}+b_{2,i+1}-b_{1i}) \\
&&=-\text{max}( x_{n+i+1}-x_{n+i} , -x_i+x_{i+1}+x_{n+i-1}-2x_{n+i}+x_{n+i+1})= \vep_i(x), \\
&& \text{for} \qq 2 \leq i \leq n-2.\\
&&\hspace{-10pt} \vep_{n-1}(\Omega(x)) = \vep_{n-1}(b) = \text{max}(b_{1n} , b_{1n}+b_{2n}-b_{1,n-1}) \\
&&= \text{max}(-x_{2n-1} ,  - x_{n-1}+ x_n+x{2n-2}-2x_{2n-1}) = \vep_{n-1}(x).\\
&&\hspace{-10pt} \vep_{n}(\Omega(x)) = \vep_{n}(b) = b_{2,n+1} - b_{1n} = -x_n+x_{2n-1} = \vep_n(x). 
\end{eqnarray*}
Now we shall check that $\Omega(\ft{i}(x)) = \ft{i}(\Omega(x))$ for $i= 0, 1, \cd , n$.
$$
\ft{1}(\Omega(x)) = \ft{1}(b) = b' = (b'_{ji}),
$$
where
\begin{eqnarray*}
b'_{11}= b_{11}-1 = x_{n+1} - 1, \, \, b'_{12}= b_{12}+1 = x_{n+2} - x_{n+1} + 1, \, \, b'_{ji} = b_{ji}, \text{otherwise}.
\end{eqnarray*}
Hence $\Omega(\ft{1}(x)) = \Omega(x_2, \cd , x_{n+1} - 1, \cd , x_{2n-1}) = \ft{1}(\Omega(x))$.

$$
\ft{n}(\Omega(x)) = \ft{n}(b) = b' = (b'_{ji}),
$$
where
\begin{eqnarray*}
b'_{2n}= b_{2n}-1 = x_n-x_{n-1} - 1, \, \, b'_{2,n+1}= b_{2,n+1}+1 = - x_n + 1, \, \, b'_{ji} = b_{ji}, \text{otherwise}.
\end{eqnarray*}
Hence $\Omega(\ft{n}(x)) = \Omega(x_2, \cd , x_n - 1, \cd , x_{2n-1}) = \ft{n}(\Omega(x))$.
Now we check that $\Omega(\ft{i}(x)) = \ft{i}(\Omega(x))$ for $2 \leq i \leq n-1$.
Let $\ft{i}(\Omega(x)) = \ft{i}(b) = b' = (b'_{ji})$. Note that  $b_{1i} = x_{n+i} - x_{n+i-1}$ and
$b_{1,i+1} = x_{i+1} - x_{i}$. Hence $b_{1i} > b_{2,i+1}$ (resp. $b_{1i} \leq b_{2,i+1}$) if and only if
$\beta_i > \beta_{i+1}$ (resp.  $\beta_i \leq \beta_{i+1}$).

If $x_{n+i} - x_{n+i-1} > x_{i+1} - x_{i}$, then $\ft{i}(\Omega(x)) = \ft{i}(b) = b' = (b'_{ji})$, where
\begin{eqnarray*}
&&b'_{1i}= b_{1i}-1 = x_{n+i}-x_{n+i-1} - 1, \, \, b'_{1,i+1}= b_{1,i+1}+1 = x_{n+i+1}-x_{n+i} + 1, \\
&&b'_{ji} = b_{ji}, \text{otherwise}.
\end{eqnarray*}
Hence $\Omega(\ft{i}(x)) = \Omega(x_2, \cd , x_{n+i}-1, \cd ,x_{2n-1}) = \ft{i}(\Omega(x))$ in this case.

If $x_{n+i} - x_{n+i-1} \leq x_{i+1} - x_{i}$, then $\ft{i}(\Omega(x)) = \ft{i}(b) = b' = (b'_{ji})$, where
\begin{eqnarray*}
&&b'_{2i}= b_{2i}-1 = x_i-x_{i-1} - 1, \, \, b'_{2,i+1}= b_{2,i+1}+1 = x_{i+1}-x_i + 1, \\
&&b'_{ji} = b_{ji}, \text{otherwise}.
\end{eqnarray*}
Hence $\Omega(\ft{i}(x)) = \Omega(x_2, \cd , x_i-1, \cd ,x_{2n-1}) = \ft{i}(\Omega(x))$ in this case.

Finally we want to verify that $\Omega(\ft{0}(x)) =  \ft{0}(\Omega(x))$. For $2 \leq m \leq n$, we have
$\ft{0}(\Omega(x)) = \ft{0}(b) = b' = (b'_{ji})$ where 
\begin{eqnarray*}
&&b'_{11} = b_{11}+1= x_{n+1}+1, \\
&& b'_{1m} = b_{1m}-1= \begin{cases} x_{n+m}-x_{n+m-1}-1, \, \, \text{if} \, \, \, m\not= n\\
-x_{2n-1}-1, \, \, \text{if} \, \, \, m = n \end{cases},\\
&&b'_{2m}= b_{2m}+1=\begin{cases} x_2 +1, \, \, \text{if} \, \, \, m=2\\
x_m - x_{m-1} +1 \, \, \text{if} \, \, \, m \not= 2 \end{cases}, \\
&& b'_{2,n+1}=b_{2,n+1}-1= -x_n-1, \qq b'_{ji} = b_{ji}, \, \, \text{otherwise},
\end{eqnarray*}
if the condition ($F_m$) in (\ref{(F)}) holds. 
Since $z_i = b_{1i} - b_{2, i+1} = (x_{n+i} - x_{n+i-1}) - (x_{i+1} -
 x_i) = \beta_i - \beta_{i+1}$
for $2 \leq i \leq n-1$, we observe that for 
$2 \leq m \leq n$, the condition ($F_m$) in 
(\ref{(F)}) holds if and only if the condition ($\phi_m$) in (\ref{phi}) holds.
Therefore, for $2 \leq m \leq n$, we have 
\begin{eqnarray*}
\Omega(\ft{0}(x))& =& \Omega(x_2, \cd , x_{m-1}, x_m+1, \cd , x_{n+m-1}+1, x_{n+m}, \cd , x_{2n-1}) \\
&=& \ft{0}(\Omega(x)),
\end{eqnarray*}
which completes the proof.

\end{proof}

\bibliographystyle{amsalpha}

\begin{thebibliography}{A}


\bibitem{BK} Berenstein A. and Kazhdan D., 
Geometric crystals and
Unipotent crystals, 
GAFA 2000(Tel Aviv,1999), Geom Funct.Anal.2000, 
Special Volume, PartI, 188--236.

\bibitem{FOS1}
Fourier G.,  Okado M., Schilling A., Kirillov-Reshetikhin crystals for
nonexceptional types, Adv. Math. {\bf 222} (3) (2009), 1080--1116.

\bibitem{FOS2}
Fourier G.,  Okado M., Schilling A., Perfectness of Kirillov-Reshetikhin
 crystals for nonexceptional types, Contemporary 
Mathematics {\bf 506}, (2010), 127-143.


\bibitem{HKOTY} Hatayama G., Kuniba A.,
Okado M., Takagi T. and Yamada Y., Remarks on 
fermionic formula, Contemp. Math. {\bf 248}
(1999), 243--291.

\bibitem{HKOTT} Hatayama G., Kuniba A.,
Okado M., Takagi T. and Tsuboi Z., 
Paths, crystals and fermionic formulae, in 
``MathPhys Odessey 2001-Integrable Models 
and Beyond In Honor of Barry M.McCoy'', 
Edited by M.Kashiwara and T.Miwa, Birkh{\"a}user
(2002), 205--272.

\bibitem{IN} Igarashi M. and Nakashima T.,
Affine Geometric Crystal of type $D^{(3)}_4$, Contemporary 
Mathematics {\bf 506}, (2010), 215-226.

\bibitem{IMN} Igarashi M. , Misra K. C. and Nakashima T.,
Ultra-Discretization of the  $D^{(3)}_4$-Geometric Crystals
to the $G^{(1)}_2$-Perfect Crystals, Pacific J. Math. , to appear.


\bibitem{KMN1} Kang S-J., Kashiwara M., Misra K.C.,
Miwa T., Nakashima T. and Nakayashiki A., 
Affine crystals and vertex models, 
Int.J.Mod.Phys.,A7 Suppl.1A (1992), 449--484.

\bibitem{KMN2} Kang S-J., Kashiwara M., Misra K.C.,
Miwa T., Nakashima T. and Nakayashiki A., 
Perfect crystals of quantum affine Lie algebras, 
Duke Math. J.,
{\bf 68}(3), (1992), 499-607.

\bibitem{KKM} Kang S-J., Kashiwara M. and Misra K.C.,
Crystal bases of Verma modules for quantum affine Lie 
algebras, 
Compositio Mathematica {\bf 92} (1994), 299--345.


\bibitem{Kas1} Kashiwara M., Crystallizing the $q$-analogue of universal enveloping
algebras, {\sl Commun. Math. Phys.,}  {\bf 133} (1990), 249--260.

\bibitem{Kas2} Kashiwara M., On crystal bases of the $q$-analogue of universal enveloping algebras, {\sl Duke Math. J.,} { \bf 63} (1991), 465--516.

\bibitem{K0} Kashiwara M.,On level-zero
representation of quantized affine algebras, 
{\sl Duke Math.J.,} {\bf 112} (2002), 499--525.

\bibitem{K1} Kashiwara M., 
Level zero fundamental representations 
over quantized affine algebras and 
Demazure modules. Publ. Res. Inst. Math. Sci. {\bf 41}
(2005), no. 1, 223--250.


\bibitem{KMOY} 
Kashiwara M., Misra K., Okado M. and Yamada D.,
Perfect crystals for $U_q(\TY(D,3,4))$, 
Journal of Algebra, {\bf 317}, no.1, (2007), 392-423.

\bibitem{KNO} Kashiwara M., Nakashima T. and Okado M.,
Affine geometric crystals and limit of perfect 
crystals, Trans.Amer.Math.Soc., {\bf 360}, (2008), no.7, 3645--3686.


\bibitem{Kac} Kac V.G., 
Infinite dimensional Lie algebras, 
Cambridge Univ.Press, 3rd edition (1990).

\bibitem{KP} Kac V.G. and Peterson D.H., 
Defining relations of 
certain infinite-dimensional groups; 
in ``Arithmetic and Geometry''(Artin M.,Tate J.,eds), 
141--166,
Birkh$\ddot{\rm a}$user, Boston-Basel-Stuttgart, (1983).

\bibitem{KR}
Kirillov A.N. and Reshetikhin N., Representations of Yangians and multiplicity
of occurrence of the irreducible components of the tensor product of representations
of simple Lie algebras, J. Sov. Math. {\bf 52} (1990), 3156--3164.

\bibitem{Ku2} Kumar S., Kac-Moody groups, 
their Flag varieties
and Representation Theory, Progress in Mathematics 204, 
Birkhauser Boston, 2002.

\bibitem{Lu} Lusztig G., Canonical bases arising from quantized enveloping algebras, 
{\sl J. Amer. Math. Soc.}  {\bf 3} (1990), 447--498.

\bibitem{MMO} Misra K.C., Mohamad M., and Okado M.,
Zero action on perfect crystals for $U_q(G_2^{(1)})$,
SIGMA, (submitted), 2009.

\bibitem{N} Nakashima T.,
Geometric crystals on Schubert varieties, 
Journal of Geometry and Physics, 
{\bf 53} (2), 197--225, (2005).

\bibitem{N2} Nakashima T., 
Geometric crystals on unipotent groups and
generalized Young tableaux, 
Journal of Algebra, {\bf 293}, No.1, 65--88, 
(2005).

\bibitem{N3} Nakashima T., 
Affine Geometric Crystal of type $\TY(G,1,2)$, 
Contemporary Mathematics, {\bf 442}, 179--192, Amer.Math.Soc.,Providence, RI, 
(2007).

\bibitem{N4} Nakashima T., 
Ultra-discretization 
of the $G^{(1)}_2$-Geometric Crystals to the
$D^{(3)}_4$-Perfect Crystals, Representation Theory of Algebraic Groups and Quantum Groups, 
Progr. Math., 284, Birkhauser/Springer, New York, 273--296, (2010). 

\bibitem{OS}
Okado M., Schilling A., Existence of Kirillov-Reshetikhin crystals
 for nonexceptional types, Represent. Theory {\bf 12} (2008), 186--207.
 
 \bibitem{OSS}
Okado M., Schilling A. Shimozono M., A tensor product theorem related to 
perfect crystals, J. Algebra {\bf 267} (2003), 212--245.


\bibitem{PK} Peterson D.H., and Kac V.G., 
Infinite flag varieties
and conjugacy theorems, Porc.Nat.Acad.Sci.USA, 
{\bf 80}, 1778--1782, (1983).


\bibitem{Y} Yamane S., Perfect Crystals of $U_q(\TY(G,1,2))$,
J.Algebra {\bf 210} no.2, 440--486, (1998).
\end{thebibliography}

\end{document}